\newtheorem{proposition}{Proposition}[section]
\newtheorem{lemma}[proposition]{Lemma}
\newtheorem{corollary}[proposition]{Corollary}
\newtheorem{theorem}[proposition]{Theorem}
\theoremstyle{definition}
\newcommand{\selabel}[1]{\label{se:#1}}
\newcommand{\seref}[1]{Section~\ref{se:#1}}
\def\<{\leqslant}
\def\>{\geqslant}
\def\a{\alpha}
\def\O{\Omega}
\def\ol{\overline}
\def\t{\triangle}
\def\e{\varepsilon}
\def\oo{\infty}
\def\s{\sigma}
\def\ot{\otimes}
\def\ra{\rightarrow}
\date{}
\begin{document}
\title{Green rings of Drinfeld Doubles of Taft algebras}
\author{Hua Sun}
\address{School of Mathematical Science, Yangzhou University,
	Yangzhou 225002, China}
\email{997749901@qq.com}
\author{Hassan Suleman Esmael Mohammed}
\address{School of Mathematical Science, Yangzhou University,
Yangzhou 225002, China}
\email{esmailhassan313@yahoo.com}
\author{Weijun Lin}
\address{School of Mathematical Science, Yangzhou University,
	Yangzhou 225002, China}
\email{wjlin@yzu.edu.cn}
\author{Hui-Xiang Chen}
\address{School of Mathematical Science, Yangzhou University,
	Yangzhou 225002, China}
\email{hxchen@yzu.edu.cn}
\thanks{2010 {\it Mathematics Subject Classification}. 16E05, 16G99, 16T99}
\keywords{Drinfeld double, Taft algebra, tensor product module, Green ring}
\begin{abstract}
In this article, we investigate the representation ring (or Green ring) of the Drinfeld double $D(H_n(q))$
of the Taft algebra $H_n(q)$, where $n$ is an integer with $n>2$
and $q$ is a root of unity of order $n$. It is shown that
the Green ring $r(D(H_n(q)))$ is a commutative
ring generated by infinitely many elements subject to certain relations.
\end{abstract}
\maketitle

\section{\bf Introduction}\selabel{1}

The study of representation rings (or Green rings) of Hopf algebras
has been revived recently. In \cite{ChVOZh} and \cite{LiZhang}, the authors investigated respectively
the representation rings of Taft algebras and generalized Taft algebras based on
the decomposition rules of tensor product modules given by Cibils \cite{Cib}.
The representation ring $r(H_n(q))$ of a Taft algebra $H_n(q)$ is isomorphic to a polynomial ring in
two variables modulo two relations.
Zhang, Wu, Liu and Chen \cite{ZWLC} studied the ring structure of the Grothendieck group
of the quantum Drinfeld double $D(H_n(q))$ of $H_n(q)$.
When $n=2$, the Taft algebra $H_2(-1)$ is the same as $H_4$, the Sweedler's 4-dimensional Hopf algebra
(see \cite{Sw, Ta}). Chen \cite{Ch5} gave the decomposition rules of tensor product modules
over $D(H_4)$ and described the Green ring $r(D(H_4))$.
$r(D(H_4))$ was also studied by Li and Hu in \cite{LiHu}.
In \cite{ChHasSun}, we studied the decomposition rules for tensor product modules over
$D(H_n(q))$ for $n\>3$.
In \cite{ChHasLinSun}, we computed the projective class rings of $D(H_n(q))$ and its two
2-cocycle deformations, and compared their representation types and some other properties
for $n\>3$.

In this article, we continue the study of the quantum Drinfeld doubles $D(H_n(q))$
of Taft algebras $H_n(q)$, but concentrate on the structure of the Green rings $r(D(H_n(q)))$.
The article is organized as follow. In \seref{2},
we recall the structure of the Hopf algebra $H_n(1,q)$, which is isomorphic to $D(H_n(q))$.
We also recall the indecomposable modules over $H_n(1,q)$.
In \seref{3}, we investigate the Green ring $r(H_n(1,q))$ for $n\>3$.
A minimal set of generators of the ring $r(H_n(1,q))$ is given.
The relations satisfied by the generators are also given. It is shown that
$r(H_n(1,q))$ is isomorphic to a quotient ring of the polynomial ring in infinitely many
variables modulo some ideal.

\section{\bf Preliminaries}\selabel{2}

Throughout, let $k$ be an algebraically closed field. Unless
otherwise stated, all algebras and Hopf algebras are
defined over $k$; all modules are finite dimensional and left modules;
dim and $\otimes$ denote ${\rm dim}_k$ and $\otimes_k$,
respectively. The references \cite{Ka, Mon, Sw} are basic references for the theory of Hopf algebras and quantum groups.
The readers can refer \cite{ARS} for the representation theory of algebras.
Let $\mathbb Z$ denote the set of all integers, and ${\mathbb Z}_n={\mathbb Z}/n{\mathbb Z}$
for an integer $n$.

\subsection{Green ring}\selabel{2.1}
~~

For an algebra $A$, we denote the category of finite dimensional $A$-modules by mod$A$.
For any $M\in{\rm mod}A$ and any $n\in\mathbb Z$ with $n\>0$, $nM$ denotes the
direct sum of $n$ copies of $M$. Then $nM=0$ if $n=0$.
Let l$(M)$ denote the length of $M$.

For a Hopf algebra $H$, mod$H$ is a tensor (or monoidal) category \cite{Ka, Mon}.
If $H$ is a quasitriangular Hopf algebra, then $U\ot V\cong V\ot U$ for any modules
$U$ and $V$ over $H$. We have already known that the quantum Drinfeld double $D(H)$ of a finite dimensional Hopf algebra
$H$ is always quasitriangular and symmetric (see \cite{Lo, ObSch, Ra}).

The Green ring $r(H)$ of a Hopf algebra $H$ is defined to be the abelian group generated by the
isomorphism classes $[V]$ of $V$ in mod$H$
modulo the relations $[U\oplus V]=[U]+[V]$, $U, V\in{\rm mod}H$. The multiplication of $r(H)$
is determined by $[U][V]=[U\ot V]$, the tensor product of $H$-modules. Then $r(H)$ is an associative ring
with identity.
The projective class ring $r_p(H)$ of $H$ is defined to be the subring of $r(H)$ generated by
projective modules and simple modules (see \cite{Cib99}).
Notice that $r(H)$ is a free abelian group with a $\mathbb Z$-basis
$\{[V]|V\in{\rm ind}(H)\}$, where ${\rm ind}(H)$ denotes the category
of indecomposable objects in mod$H$.

\subsection{Drinfeld doubles of Taft algebras}\selabel{1.2}
~~

The quantum doubles (or Drinfeld doubles) $D(H_n(q))$ of Taft algebra $H_n(q)$ and their finite dimensional representations
were studied in \cite{Ch1, Ch2, Ch3, Ch4}. 

Let $n\>2$ and let $q\in k$ be a root of unity of order $n$. $H_n(q)$ is
generated, as an algebra, by two elements $g$ and $h$ subject to the following relations (see \cite{Ta}):
$$g^n=1,\quad\quad h^n=0,\quad\quad gh=qhg.$$
$H_n(q)$ is a Hopf algebra with the coalgebra structure and the antipode given by
$$
\begin{array}{lll}
\t(g)=g\otimes g, & \t(h)=h\otimes g+1\otimes h, & \e(g)=1,\\
\e(h)=0, & S(g)=g^{-1}=g^{n-1}, &  S(h)=-q^{-1}g^{n-1}h.
\end{array}
$$
Note that ${\rm dim}H_n(q)=n^2$, and $H_n(q)$ has a $k$-basis $\{g^ih^j|0\<i, j\<n-1\}$.
The Drinfeld double $D(H_n(q))$ can be described as follows.

Let $p\in k$. The Hopf algebra $H_n(p, q)$ is generated, as an algebra, by $a, b, c$ and $d$
subject to the following relations:
$$\begin{array}{lllll}
ba=qab,& db=qbd, & ca=qac,& dc=qcd,& bc=cb,\\
a^n=0, & b^n=1, &c^n=1,& d^n=0, & da-qad=p(1-bc).
\end{array}$$
The comultiplication, the counit and the antipode of $H_n(p, q)$ are determined by
$$
\begin{array}{lll}
\t(a)=a\otimes b+1\otimes a, & \e(a)=0, & S(a)=-ab^{-1}=-ab^{n-1},\\
\t(b)=b\otimes b, & \e(b)=1, & S(b)=b^{-1}=b^{n-1},\\
\t(c)=c\otimes c,& \e(c)=1, & S(c)=c^{-1}=c^{n-1},\\
\t(d)=d\otimes c+1\otimes d,& \e(d)=0, & S(d)=-dc^{-1}=-dc^{n-1}.
\end{array}
$$
$H_n(p, q)$ is $n^4$-dimensional with a $k$-basis $\{a^ib^jc^ld^k|0\<i, j, l, k\<n-1\}$.
$H_n(p, q)$ is not semisimple.
If $p\neq 0$, then $H_n(p, q)$ is isomorphic, as a Hopf algebra, to $D(H_n(q))$.
In particular, we have $H_n(p, q)\cong H_n(1, q)\cong D(H_n(q))$ for any $p\neq 0$.
For the details, the reader is directed to \cite{Ch1, Ch2}.
By \cite{Andrea, ZWLC}, one knows that the small quantum group
is isomorphic, as a Hopf algebra, to a quotient of $H_n(1,q)$.

\subsection{Indecomposable modules over $H_n(1,q)$}\selabel{1.3}
~~

Let $J:={\rm rad}(H_n(1,q))$ denote the Jacobson radical of $H_n(1,q)$.
Then  $J^3=0$ by \cite[Corollary 2.4]{Ch4}. That is, the Loewy length
of $H_n(1,q)$ is 3. In order to study the Green ring of $H_n(1,q)$,
we first need to give all finite dimensional indecomposable modules over $H_n(1,q)$.
We will use the notations of \cite{Ch4}. Unless otherwise stated, all modules are modules over $H_n(1,q)$
in what follows.

It follows from \cite{Ch4} that any indecomposable module has the same the socle series and the radical series.
For any $i, l\in\mathbb{Z}$ with $i\>0$, let $\a_i(l)=(i)_q(1-q^{i-l})$, where $(0)_q=0$.

Simple modules: $V(l,r)$, $1\<l\<n$, $r\in{\mathbb Z}_n$. $V(l,r)$ has
a standard $k$-basis $\{v_i|1\<i\<l\}$ such that
\begin{equation*}
\begin{array}{ll}
av_i=\left\{
\begin{array}{ll}
v_{i+1},& 1\<i<l,\\
0,& i=l,\\
\end{array}\right.&
dv_i=\left\{
\begin{array}{ll}
0, & i=1,\\
\a_{i-1}(l)v_{i-1},& 1<i\<l,\\
\end{array}\right.\\
bv_i=q^{r+i-1}v_i,\ 1\<i\<l, &cv_i=q^{i-r-l}v_i,\ 1\<i\<l.\\
\end{array}
\end{equation*}
The simple modules $V(n,r)$, $r\in{\mathbb Z}_n$, are both projective and injective.

Projective modules (not simple): Let $P(l, r)$ be the  projective cover of
$V(l, r)$, $1\<l<n$, $r\in{\mathbb Z}_n$. Then $P(l, r)$ is also the injective envelope of $V(l, r)$.
Moreover, we have (see \cite{Ch4})
$$\begin{array}{l}
{\rm soc}P(l, r)={\rm rad}^2P(l,r)\cong P(l, r)/{\rm rad}P(l, r)=P(l, r)/{\rm soc}^2P(l, r)\cong V(l, r),\\
{\rm soc}^2P(l, r)/{\rm soc}(P(l, r))={\rm rad}P(l, r)/{\rm rad}^2P(l, r)\cong 2V(n-l, r+l).\\
\end{array}$$

Non-simple and non-projective indecomposable modules can be divided into two
families: string modules and band modules.

String modules: The string modules are given by the syzygy functor $\O$ and cosyzygy functor $\O^{-1}$.
Let $1\<l<n$ and $r\in{\mathbb Z}_n$.
Then the minimal projective and injective resolutions of $V(l, r)$ are given by
$$
\cdots\ra4P(n-l, r+l)\ra3P(l, r)\ra2P(n-l, r+l)\ra P(l, r)\ra
V(l, r)\ra 0$$
and
$$
0\ra V(l, r)\ra P(l, r)\ra 2P(n-l, r+l)\ra 3P(l, r)\ra 4P(n-l, r+l)
\ra\cdots,$$
respectively. By these resolutions, one can describe the structure of $\O^sV(l, r)$ and $\O^{-s}V(l, r)$,
respectively, where $s\>1$ (see \cite{Ch4}).

Band modules: Let $\oo$ be a symbol with $\oo\not\in k$ and
let $\overline k=k\cup\{\oo\}$.
The band modules $M_s(l,r,\eta)$ can be described as follows, where $1\<l<n$, $r\in{\mathbb Z}_n$
and $\eta\in\ol{k}$ (see \cite{Ch4}).
By \cite[Lemma 3.7(1)]{Ch4}, the indecomposable module $M_1(l,r,\oo)$, $1\<l<n$, $r\in{\mathbb Z}_n$,
has a standard basis $\{v_1, v_2, \cdots, v_n\}$ such that
\begin{equation*}
\begin{array}{ll}
av_i=\left\{\begin{array}{ll}
0, & i=n-l \mbox{ or }n,\\
v_{i+1}, & \mbox{ otherwise },\\
\end{array}\right. &
dv_i=\left\{\begin{array}{ll}
v_n, & i=1,\\
\a_{i-1}(n-l)v_{i-1}, & 1<i\<n,\\
\end{array}\right.\\
bv_i=q^{r+l+i-1}v_i, & cv_i=q^{i-r}v_i.\\
\end{array}
\end{equation*}
By \cite[Lemma 3.7(2)]{Ch4}, the indecomposable module $M_1(l,r,\eta)$, $1\< l<n$, $r\in{\mathbb Z}_n$, $\eta\in k$,
has a standard basis $\{v_1, v_2, \cdots, v_n\}$ with the action given by
\begin{equation*}
\begin{array}{ll}
av_i=\left\{\begin{array}{ll}
v_{i+1} , & 1\<i<n,\\
0 , & i=n,\\
\end{array}\right. &
dv_i=\left\{\begin{array}{ll}
\eta q^lv_n, & i=1,\\
\a_{i-1}(n-l)v_{i-1}, & 1<i\<n,\\
\end{array}\right.\\
bv_i=q^{r+l+i-1}v_i, & cv_i=q^{i-r}v_i.\\
\end{array}
\end{equation*}
Then the band modules $M_s(l, r,\eta)$ are determined recursively by the almost split sequences
$$0\ra M_s(l, r,\eta)\ra M_{s-1}(l, r,\eta)\oplus M_{s+1}(l, r,\eta)\ra M_s(l, r,\eta)\ra 0,$$
where $s\>1$, $M_0(l, r,\eta)=0$, $1\<l<n$, $r\in{\mathbb Z}_n$ and $\eta\in\ol{k}$
(see \cite{Ch3, Ch4}). $M_s(l, r,\eta)$ is a submodule of $sP(l,r)$
and a quotient module of $sP(n-l, r+l)$, and there is an exact sequence
$$0\ra M_s(l, r,\eta)\hookrightarrow sP(l, r)\ra M_s(n-l, r+l, -\eta q^l)\ra 0.$$
Hence $\O M_s(l, r, \eta)\cong \O^{-1} M_s(l, r, \eta)\cong M_s(n-l, r+l, -\eta q^l)$.
Moreover, for any $1\<i<s$, there is an exact sequence of modules
$$0\ra M_i(l, r,\eta)\hookrightarrow M_s(l, r, \eta)\ra M_{s-i}(l, r, \eta)\ra 0.$$

Throughout the following, let $n$ be a fixed positive integer with $n>2$,
and $q\in k$ a root of unity of order $n$.
Let $P(n, r)=V(n, r)$ and $\O^0V(l,r)=V(l,r)$ for all $1\<l<n$ and $r\in\mathbb{Z}_n$,
and let $\a\oo=\oo\a=\oo$ for any $0\neq\a\in k$.
For any $t\in\mathbb Z$, let $c(t):=[\frac{t+1}{2}]$ be the integer part of $\frac{t+1}{2}$.
That is, $c(t)$ is the maximal integer with respect to $c(t)\<\frac{t+1}{2}$.
Then $c(t)+c(t-1)=t$.

\section{\bf The Green Ring of $H_n(1,q)$}\selabel{3}

In this section, we study the Green ring  $r(H_n(1,q))$ of $H_n(1,q)$, which is a commutative ring
since $H_n(1,q)$ is quasitriangular.

The projective class ring $r_p(H_n(1,q))$ was described in
\cite{ChHasLinSun}, which is isomorphic to the quotient ring of the polynomial ring
$\mathbb{Z}[x,y]$ modulo the ideal $I$ generated by $x^n-1$ and
$$\begin{array}{c}
(\sum_{i=0}^{c(n-2)}(-1)^i\binom{n-1-i}{i}x^iy^{n-1-2i})
(\sum_{i=0}^{c(n-1)}(-1)^i\frac{n}{n-i}\binom{n-i}{i}x^iy^{n-2i}-2).\\
\end{array}$$
An isomorphism $\mathbb{Z}[x, y]/I\ra r_p(H_n(1,q))$ is given by
$x+I\mapsto[V(1,1)]$ and $y+I\mapsto[V(2,0)]$ (see \cite[Theorem 5.9]{ChHasLinSun}).
By \cite[Corollary 5.2]{ChHasLinSun}, $r_p(H_n(1,q))$ is a free $\mathbb Z$-module with a $\mathbb Z$-basis
$$\{[V(s,r)], [P(l,r)]|1\<s\<n, 1\<l<n, r\in\mathbb{Z}_n\}.$$

Let $x=[V(1,1)]$, $y=[V(2,0)]$, $z_+=[\O V(1,0)]$ and $z_{-}=[\O^{-1}V(1,0)]$ in $r(H_n(1,q))$.
Let $R$ be the subring of $r(H_n(1,q))$ generated by $x, y, z_+$ and $z_-$. Then
$r_p(H_n(1,q))\subset R$ by \cite[Proposition 5.5]{ChHasLinSun}.

\begin{proposition}\label{3.1}
$R$ is a free $\mathbb Z$-module with a $\mathbb Z$-basis
$$\{[V(s,r)], [P(l,r)], [\O^{\pm m}V(l,r)]|1\leqslant s\leqslant n,
1\leqslant l\leqslant n-1, m\>1, r\in\mathbb{Z}_n\}.$$
\end{proposition}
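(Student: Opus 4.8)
The plan is to show that the proposed set, call it $\mathcal B$, both spans $R$ over $\mathbb Z$ and is $\mathbb Z$-linearly independent. Linear independence is immediate: $\mathcal B$ is a subset of $\{[W]\mid W\in\mathrm{ind}(H_n(1,q))\}$, and by the remarks in \seref{2.1} the classes of indecomposable modules form a $\mathbb Z$-basis of $r(H_n(1,q))$; the listed modules are pairwise non-isomorphic indecomposables (the string modules $\O^{\pm m}V(l,r)$ with $m\ge 1$, $1\le l\le n-1$, together with the simples $V(s,r)$ and the non-simple projectives $P(l,r)$), so their classes are part of that basis and hence independent. The real content is spanning: one must prove that every generator of $R$ — namely $x=[V(1,1)]$, $y=[V(2,0)]$, $z_+=[\O V(1,0)]$, $z_-=[\O^{-1}V(1,0)]$ — lies in the $\mathbb Z$-span $M$ of $\mathcal B$, and that $M$ is closed under multiplication.

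First I would record that $M$ contains $r_p(H_n(1,q))$: indeed $r_p$ has the $\mathbb Z$-basis $\{[V(s,r)],[P(l,r)]\}$ recalled from \cite[Corollary 5.2]{ChHasLinSun}, which is a sub-collection of $\mathcal B$. So $x,y\in M$, and it remains to handle $z_\pm$ and closure under products. The generators $z_\pm=[\O^{\pm 1}V(1,0)]$ are themselves members of $\mathcal B$ (case $l=1$, $m=1$), so all four ring generators lie in $M$; the crux is therefore to show $M\cdot M\subseteq M$, for which it suffices to check that the product of any two basis elements of $\mathcal B$ decomposes as a $\mathbb Z$-combination of elements of $\mathcal B$. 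The products among $\{[V(s,r)],[P(l,r)]\}$ stay inside $r_p\subseteq M$ by the projective class ring structure. The genuinely new computations are $[V(s,r)]\cdot[\O^{\pm m}V(l,r')]$, $[P(l',r')]\cdot[\O^{\pm m}V(l,r)]$, and $[\O^{\pm m}V(l,r)]\cdot[\O^{\pm m'}V(l',r')]$. For these I would use the structural facts assembled in \seref{2}: that $\O^{\pm m}V(l,r)$ is (up to the $\O/\O^{-1}$ identification on band/string modules) built from the minimal resolutions displayed there, that tensoring with a projective module yields a projective module (so $[P(l',r')]\cdot[\text{anything}]$ lands in $r_p$), and that $\O$ is compatible with $\otimes$ in a Hopf-algebra module category (tensoring the minimal projective presentation of $V(l,r)$ with a module $W$ and splitting off projective summands gives $\O(V(l,r)\otimes W)\cong \O V(l,r)\otimes W$ modulo projectives). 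Iterating this reduces every mixed product to a combination of $\O^{\pm m}$ applied to tensor products of simple modules, plus projective error terms, and the decomposition rules for tensor products of simples over $H_n(1,q)$ — equivalently over $D(H_n(q))$, as studied in \cite{ChHasSun} — then express everything in terms of $\mathcal B$.

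The step I expect to be the main obstacle is bookkeeping the projective correction terms precisely enough: when one tensors a minimal projective presentation with an arbitrary indecomposable $W$ and strips off the projective direct summands to read off $\O(-\otimes W)$, one must verify that the summands that appear are exactly among the $P(l,r)$ (and the projective-injective simples $V(n,r)$), with the correct multiplicities, so that the "error" stays inside $r_p\subseteq M$ rather than escaping the span. This is where the detailed module theory of \cite{Ch4} — the uniform Loewy length $3$, the soc/rad filtrations of the $P(l,r)$, and the explicit form of $\O^{\pm s}V(l,r)$ — is indispensable. Once closure of $M$ under multiplication is established, $M$ is a subring of $r(H_n(1,q))$ containing all generators of $R$, hence $R\subseteq M$; the reverse inclusion $M\subseteq R$ follows because each element of $\mathcal B$ is expressible in $x,y,z_\pm$ (the $[V(s,r)],[P(l,r)]$ via the projective class ring, and the $[\O^{\pm m}V(l,r)]$ via the same $\O$-versus-$\otimes$ compatibility applied to products of $z_\pm$ with elements of $r_p$). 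Therefore $R=M$ is free on $\mathcal B$, as claimed.
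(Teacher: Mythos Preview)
Your proposal is correct and follows essentially the same two-inclusion strategy as the paper: show that the $\mathbb Z$-span $M$ of $\mathcal B$ is a subring (hence contains $R$), and show each $[\O^{\pm m}V(l,r)]$ lies in $R$ (hence $M\subseteq R$). The paper dispatches the closure step simply by citing the explicit tensor decomposition results of \cite{ChHasSun} (Corollaries~3.2, 5.3, 5.5 and Propositions~3.6, 3.7, 5.2, 5.4 there), and handles the reverse inclusion by the inductive identities $\O V(1,0)\otimes\O^mV(1,0)\cong\O^{m+1}V(1,0)\oplus P$ and $V(l,r)\otimes\O^{\pm m}V(1,0)\cong\O^{\pm m}V(l,r)\oplus P$; your ``$\O$ compatible with $\otimes$ modulo projectives'' heuristic is exactly what these results make precise, so the obstacle you anticipate is already resolved in the literature rather than requiring fresh bookkeeping.
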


\begin{proof}
Let $R'$ be the $\mathbb Z$-submodule of $r(H_n(1,q))$ generated by the set
$$\{[V(s,r)], [P(l,r)], [\O^{\pm m}V(l,r)]|1\leqslant s\leqslant n,
1\leqslant l\leqslant n-1, m\>1, r\in\mathbb{Z}_n\}.$$
Since the tensor product of a projective module with any module is projective,
it follows from \cite[Corollaries 3.2, 5.3 and 5.5, Propositions 3.6, 3.7, 5.2 and 5.4]{ChHasSun}
that $R'$ is a subring of $r(H_n(1,q))$.
Hence $R\subseteq R'$. It is left to show $R'\subseteq R$.
By $r_p(H_n(1,q))\subset R$ and \cite[Corollary 5.2]{ChHasLinSun}, it is enough to show
$[\O^{\pm m}V(l,r)]\in R$ for all $1\leqslant l\leqslant n-1$, $m\>1$ and $r\in\mathbb{Z}_n$.

At first, $[\O V(1,0)]=z_+\in R$. Let $m\>1$ and assume that $[\O^mV(1,0)]\in R$.
Then by \cite[Proposition 5.2]{ChHasSun}, $\O V(1,0)\ot\O^mV(1,0)\cong\O^{m+1}V(1,0)\oplus P$
for some projective module $P$. Hence $[\O^{m+1}V(1,0)]=z_+[\O^mV(1,0)]-[P]\in R$
by $[P]\in r_p(H_n(1,q))\subset R$. This shows that $[\O^mV(1,0)]\in R$ for all
$m\>1$. Similarly, one can show that $[\O^{-m}V(1,0)]\in R$ for all
$m\>1$. Then for any $1\<l\<n-1$, $m\>1$ and $r\in\mathbb{Z}_n$, by \cite[Proposition 3.6]{ChHasSun}, we have
$V(l, r)\ot\O^{\pm m}V(1,0)\cong\O^{\pm m}V(l,r)\oplus P$ for some projective module $P$.
By $[P], [V(l,r)]\in r_p(H_n(1,q))\subset R$, it follows that
$[\O^{\pm m}V(l,r)]=[V(l,r)][\O^{\pm m}V(1,0)]-[P]\in R$.
This completes the proof.
\end{proof}

\begin{lemma}\label{3.2}
Let $l, m\in\mathbb{Z}$ with $1\<l\<\frac{m-1}{2}$. Then for any $s\>0$,
$$\begin{array}{c}
\sum_{i=0}^{s}(-1)^i\frac{m-2l+2i}{m-2l+i}\binom{m-2l+i}{i}=(-1)^s\binom{m-2l+s}{s}.\\
\end{array}$$
In particular, $\sum_{i=0}^{l-1}(-1)^i\frac{m-2l+2i}{m-2l+i}\binom{m-2l+i}{i}=(-1)^{l-1}\binom{m-l-1}{l-1}$.
\end{lemma}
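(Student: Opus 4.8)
The identity is a finite binomial sum identity, so the natural approach is induction on $s$. For the base case $s=0$, the left-hand side is $\frac{m-2l}{m-2l}\binom{m-2l}{0}=1=(-1)^0\binom{m-2l}{0}$, which holds (note $1\<l\<\frac{m-1}{2}$ guarantees $m-2l\>1$, so no division by zero occurs). For the inductive step, assume the formula holds for some $s\>0$; then
$$\sum_{i=0}^{s+1}(-1)^i\frac{m-2l+2i}{m-2l+i}\binom{m-2l+i}{i}=(-1)^s\binom{m-2l+s}{s}+(-1)^{s+1}\frac{m-2l+2(s+1)}{m-2l+s+1}\binom{m-2l+s+1}{s+1},$$
and it remains to check that this equals $(-1)^{s+1}\binom{m-2l+s+1}{s+1}$, i.e. that
$$\frac{m-2l+2s+2}{m-2l+s+1}\binom{m-2l+s+1}{s+1}-\binom{m-2l+s+1}{s+1}=\binom{m-2l+s}{s}.$$
Writing $N:=m-2l$, the left side is $\frac{N+s+1}{N+s+1}\binom{N+s+1}{s+1}=\binom{N+s+1}{s+1}-\binom{N+s}{s}$... wait, let me restate: $\frac{N+2s+2}{N+s+1}\binom{N+s+1}{s+1}-\binom{N+s+1}{s+1}=\frac{N+2s+2-(N+s+1)}{N+s+1}\binom{N+s+1}{s+1}=\frac{s+1}{N+s+1}\binom{N+s+1}{s+1}=\binom{N+s}{s}$, using the absorption identity $\frac{s+1}{N+s+1}\binom{N+s+1}{s+1}=\binom{N+s}{s}$. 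This closes the induction.

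The "in particular" statement then follows by substituting $s=l-1$: the right-hand side becomes $(-1)^{l-1}\binom{m-2l+l-1}{l-1}=(-1)^{l-1}\binom{m-l-1}{l-1}$, as claimed. One should double-check that with $s=l-1$ the hypothesis $1\<l\<\frac{m-1}{2}$ keeps all the binomial coefficients $\binom{m-2l+i}{i}$ well-defined for $0\<i\<l-1$; since $m-2l+i\>m-2l\>1>0$ and the lower index $i$ is a nonnegative integer, every term is a genuine binomial coefficient, so there is nothing to worry about.

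The only mildly delicate point is bookkeeping with the telescoping/absorption identity $\frac{s+1}{N+s+1}\binom{N+s+1}{s+1}=\binom{N+s}{s}$ and keeping the signs straight; there is no real obstacle here, as this is a routine induction. An alternative, perhaps cleaner, route would be to recognize the summand as a telescoping difference directly: one can verify that $(-1)^i\frac{N+2i}{N+i}\binom{N+i}{i}=(-1)^i\binom{N+i}{i}-(-1)^{i-1}\binom{N+i-1}{i-1}$, so the sum over $0\<i\<s$ collapses to $(-1)^s\binom{N+s}{s}$ immediately; I would likely present it this way to avoid an explicit induction.
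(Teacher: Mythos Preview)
Your proof is correct and follows precisely the approach the paper indicates: induction on $s$, with the inductive step reducing to the absorption identity $\frac{s+1}{N+s+1}\binom{N+s+1}{s+1}=\binom{N+s}{s}$. The telescoping variant you mention at the end is an equivalent repackaging of the same computation; either presentation is fine, though you should remove the informal ``wait, let me restate'' aside before finalizing.
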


\begin{proof}
It can be shown by induction $s$.
\end{proof}

\begin{lemma}\label{3.3}
The following relations are satisfied in $r(H_n(1,q))$:\\
$(1)$ $\sum_{i=0}^{c(n-2)}[P(2i+1,-i)]=x[V(n,0)]^2$.\\
$(2)$ $\sum_{i=1}^{c(n-2)}[P(2i+1,-i)]=\sum_{i=1}^{c(n-2)}(-1)^{i-1}\binom{n-i-2}{i-1}x^{i+1}y^{n-1-2i}[V(n,0)].$\\
$(3)$ $\sum_{i=1}^{c(n-1)}[P(2i,-i)]=\sum_{i=1}^{c(n-1)}(-1)^{i-1}\binom{n-i-1}{i-1}x^iy^{n-2i}[V(n,0)].$
\end{lemma}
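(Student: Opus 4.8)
The plan is to prove all three identities by the same mechanism: translate each left-hand side into a statement about the projective class ring $r_p(H_n(1,q))$, where everything is controlled by the presentation $\mathbb{Z}[x,y]/I$ recalled above, and then match coefficients using the combinatorial identity of \leref{3.2}. The key structural input is that $[P(l,r)]$ for $1\<l<n$, together with the $[V(s,r)]$, form a $\mathbb{Z}$-basis of $r_p(H_n(1,q))$, and that $r_p(H_n(1,q))$ is commutative with $x=[V(1,1)]$ and $y=[V(2,0)]$ acting as the two polynomial generators. Since tensoring with $[V(n,0)]$ is the ``twist'' by the invertible element $x^0$-component corresponding to $V(n,0)$ (indeed $V(n,0)$ is projective and $[V(n,0)]$ is a unit in $r_p$), each of (1)--(3) is equivalent after cancelling $[V(n,0)]$ to a polynomial identity in $x,y$ modulo $I$.

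First I would establish (1). The point is that $x[V(n,0)]^2=[V(1,1)\ot V(n,0)\ot V(n,0)]$, and by the known tensor product decompositions over $H_n(1,q)$ (the projective-times-anything-is-projective principle together with the explicit Clebsch--Gordan type rules from \cite{ChHasSun}), $V(n,0)\ot V(n,0)$ decomposes as a direct sum of the projectives $P(2i+1,-i)$ for $0\<i\<c(n-2)$ after tensoring once more by $V(1,1)$; equivalently one reads off $\sum_{i=0}^{c(n-2)}[P(2i+1,-i)]$ directly from the fusion rule for $V(n,0)^{\ot 2}$ shifted by $x$. Here the indexing $2i+1$ and the twist $-i$ come from tracking the $b,c$-eigenvalues: $V(n,0)$ has ``width'' $n$, and the tensor square breaks into blocks of odd widths $1,3,5,\dots$ up to $n$ or $n-1$, which is exactly the range $0\<i\<c(n-2)$. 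This step is essentially bookkeeping on the decomposition rules already in the literature.

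Next, for (2) and (3), I would compute the \emph{same} sums a second way, now expanding $[P(l,r)]$ in terms of $x$ and $y$ using the projective class ring relations. In $r_p(H_n(1,q))$ one has explicit formulas $[P(l,r)]=\big(\text{polynomial in }x,y\big)[V(n,0)]$: concretely, writing $[V(l,r)]$ as a Chebyshev-like polynomial in $x,y$ (these are the $a_l$ appearing implicitly in the first displayed generator of $I$), the projective cover satisfies $[P(l,r)]=[V(l,r)]+2[\text{lower}]+\cdots$, and summing over the arithmetic progression $l=2i+1$, $r=-i$ (resp. $l=2i$, $r=-i$) telescopes. The coefficient $(-1)^{i-1}\binom{n-i-2}{i-1}$ in (2) and $(-1)^{i-1}\binom{n-i-1}{i-1}$ in (3) are precisely the ``in particular'' case of \leref{3.2} with $m=n$ and $l$ replaced by $i+1$ (resp. $i$): the alternating sum $\sum(-1)^i\frac{m-2l+2i}{m-2l+i}\binom{m-2l+i}{i}$ collapses to a single binomial coefficient, which is what lets the double sum on the right collapse to match the single sum of projectives on the left. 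So (2) and (3) follow by equating the two expressions for $\sum[P(2i+1,-i)]$ and $\sum[P(2i,-i)]$ and invoking \leref{3.2} termwise.

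The main obstacle I anticipate is organizing the bookkeeping in step one and the telescoping in step three so that the index ranges, the $\mathbb{Z}_n$-twists $r=-i$, and the floor functions $c(n-2), c(n-1)$ all line up exactly---in particular handling the boundary term (whether the top block has width $n$, hence is the simple projective $P(n,\cdot)=V(n,\cdot)$, or width $n-1$) depending on the parity of $n$, which is the usual source of off-by-one errors. Once the decomposition of $V(n,0)^{\ot 2}$ is pinned down and the polynomial expansion of $[P(l,r)]$ is written cleanly, the algebraic identity reduces entirely to \leref{3.2}, and no further nontrivial combinatorics is needed.
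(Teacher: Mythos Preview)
Your plan is essentially the same as the paper's: (1) is obtained from the decomposition of $V(n,1)\ot V(n,0)$ (equivalently $V(1,1)\ot V(n,0)^{\ot 2}$) into projectives $P(2i+1,-i)$, and (3) is obtained by writing each $[P(2i,-i)]$ as a polynomial in $x,y$ times $[V(n,0)]$ and collapsing the resulting double sum with \leref{3.2}. For (2) the paper takes a small shortcut you do not mention: rather than expanding every $[P(2i+1,-i)]$ and telescoping, it simply subtracts the single term $[P(1,0)]$ from the result of (1), writes both $x[V(n,0)]^2$ and $[P(1,0)]$ as explicit polynomials in $x,y$ times $[V(n,0)]$ (via the formulas in \cite{ChHasLinSun}), and simplifies the difference directly; \leref{3.2} is not needed for (2). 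Your direct-expansion route would also work, but is more laborious.

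One genuine error to flag: your claim that $[V(n,0)]$ is a unit in $r_p(H_n(1,q))$ is false. The dimension function gives a ring homomorphism $r(H_n(1,q))\to\mathbb{Z}$ sending $[V(n,0)]$ to $n>1$, so $[V(n,0)]$ cannot be invertible; in fact it is a zero divisor (the second generator of $I$ kills it). You may be confusing $V(n,0)$ with the one-dimensional simples $V(1,r)$, whose classes are the units $x^r$. Fortunately your actual computations never invoke this ``cancellation'', so the error is confined to that sentence; you should simply delete it and not frame the identities as equalities in $\mathbb{Z}[x,y]/I$ after dividing out $[V(n,0)]$.
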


\begin{proof}
(1) By \cite[Corollary 5.4(1)]{ChHasLinSun}, we have $x[V(n,0)]^2=[V(n,1)][V(n,0)]=[V(n,1)\ot V(n,0)]$.
Then by \cite[Proposition 3.1(2)]{ChHasSun}, one gets that
$$\begin{array}{c}
V(n,1)\ot V(n,0)\cong\oplus_{i=c(n-1)}^{n-1}P(2n-1-2i, i+1)
\cong\oplus_{i=0}^{c(n-2)}P(2i+1, -i).\\
\end{array}$$
Therefore, $\sum_{i=0}^{c(n-2)}[P(2i+1,-i)]=x[V(n,0)]^2$.

(2) By (1) and \cite[Lemma 5.6]{ChHasLinSun}, we have
$$\begin{array}{rl}
&\sum_{i=1}^{c(n-2)}[P(2i+1,-i)]=x[V(n,0)]^2-[P(1,0)]\\
=&(\sum_{i=0}^{[\frac{n-1}{2}]}(-1)^i\binom{n-1-i}{i}x^{i+1}y^{n-1-2i}\\
&-\sum_{i=0}^{[\frac{n-1}{2}]}(-1)^i\frac{n-1}{n-1-i}\binom{n-1-i}{i}x^{i+1}y^{n-1-2i})[V(n,0)]\\
=&\sum_{i=1}^{[\frac{n-1}{2}]}(-1)^{i-1}(\frac{n-1}{n-1-i}\binom{n-1-i}{i}-\binom{n-1-i}{i})x^{i+1}y^{n-1-2i}[V(n,0)]\\
=&\sum_{i=1}^{c(n-2)}(-1)^{i-1}\binom{n-i-2}{i-1}x^{i+1}y^{n-1-2i}[V(n,0)].\\
\end{array}$$

(3) We may assume that $n$ is even since the proof is similar for $n$ to be odd.
In this case, by \cite[Corollary 5.4(1)-(2), Lemma 5.6(2)]{ChHasLinSun} and Lemma \ref{3.2}, we have
$$\begin{array}{rl}
&\sum_{i=1}^{c(n-1)}[P(2i, -i)]=\sum_{i=1}^{\frac{n-2}{2}}x^{-i}[P(2i,0)]+x^{-\frac{n}{2}}[V(n,0)]\\
=&\sum_{i=1}^{\frac{n-2}{2}}\sum_{j=0}^{\frac{n-2i}{2}}(-1)^j\frac{n-2i}{n-2i-j}
\binom{n-2i-j}{j}x^{i+j}y^{n-2i-2j}[V(n,0)]+x^{\frac{n}{2}}[V(n,0)]\\
=&(\sum_{l=1}^{\frac{n-2}{2}}\sum_{j=0}^{l-1}(-1)^j\frac{n-2l+2j}{n-2l+j}
\binom{n-2l+j}{j}x^{l}y^{n-2l}+\sum_{j=1}^{\frac{n-2}{2}}(-1)^j2x^{\frac{n}{2}}+x^{\frac{n}{2}})[V(n,0)]\\
=&(\sum_{l=1}^{\frac{n-2}{2}}(-1)^{l-1}\binom{n-l-1}{l-1}x^{l}y^{n-2l}
+(2\sum_{j=1}^{\frac{n-2}{2}}(-1)^j+1)x^{\frac{n}{2}})[V(n,0)]\\
=&\sum_{l=1}^{\frac{n}{2}}(-1)^{l-1}\binom{n-l-1}{l-1}x^{l}y^{n-2l}[V(n,0)].\\
\end{array}$$
\end{proof}

\begin{lemma}\label{3.4}
The following relations are satisfied in $r(H_n(1,q))$:\\
$(1)$ $z_+z_-=1+(2y+4\sum_{i=1}^{c(n-2)}(-1)^{i-1}\binom{n-i-2}{i-1}x^{i+1}y^{n-1-2i})[V(n,0)]$.\\
$(2)$ $z_{+}[V(n,0)]=z_{-}[V(n,0)]=(1+2\sum_{i=1}^{c(n-1)}(-1)^{i-1}\binom{n-i-1}{i-1}x^iy^{n-2i})[V(n,0)]$.
\end{lemma}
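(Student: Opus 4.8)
The plan is to reduce everything to tensor product decomposition rules from \cite{ChHasSun} together with the already-established identities of Lemma~\ref{3.3}. For part~(1), the natural starting point is the isomorphism $\O V(1,0)\ot\O^{-1}V(1,0)\cong V(1,0)\oplus(\text{projective})$, which follows from the general fact that $\O M\ot\O^{-1}N\cong M\ot N\oplus(\text{projective})$ in a self-injective (indeed symmetric) algebra, applied with $M=N=V(1,0)$ and $[V(1,0)]=1$ in $r(H_n(1,q))$. So $z_+z_-=1+[P]$ where $P$ is the projective summand of $\O V(1,0)\ot\O^{-1}V(1,0)$, and the task is to identify $[P]$ explicitly. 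I would read off the precise decomposition of $\O V(1,0)\ot\O^{-1}V(1,0)$ from the relevant proposition of \cite{ChHasSun} (the one computing $\O^{\pm}$-twisted tensor products); the projective part should come out as a sum of $P(l,r)$'s whose class, after collecting terms, matches $2y[V(n,0)]$ plus the sum appearing in Lemma~\ref{3.3}(2) scaled by $4$. The factor $2$ versus $4$ and the shift of the summation range are exactly what Lemma~\ref{3.3}(2) is designed to absorb: one rewrites $\sum_{i\ge 1}[P(2i+1,-i)]$ in terms of $x,y,[V(n,0)]$ and doubles appropriately.

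For part~(2), the two equalities $z_+[V(n,0)]=z_-[V(n,0)]$ should be immediate from the symmetry of the construction (or from $V(n,0)$ being projective, so that $\O^{\pm 1}V(1,0)\ot V(n,0)$ is projective and its class depends only on composition-factor data that is insensitive to the sign). Concretely, $\O V(1,0)\ot V(n,0)$ is a projective module; since projectives over $H_n(1,q)$ are determined by their tops, I would compute $\operatorname{top}(\O V(1,0))$ — from the minimal projective resolution $P(n-1,1)\ra P(1,0)\ra V(1,0)\ra 0$ one gets $\O V(1,0)=\operatorname{rad}P(1,0)$, whose structure is recorded right after the statement "${\rm soc}^2P(l,r)/{\rm soc}\cong 2V(n-l,r+l)$", giving $\operatorname{top}\O V(1,0)\cong 2V(n-1,1)$ plus a copy of $V(1,0)$ in the socle. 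Tensoring with $V(n,0)$ and using that $V(j,r)\ot V(n,0)$ is a sum of projectives whose classes are already known in $r_p(H_n(1,q))$, I collect terms; the outcome should be $[V(1,0)\ot V(n,0)]$ plus $2\sum_{i\ge 1}[P(2i,-i)]$-type contributions, and then Lemma~\ref{3.3}(3) converts this into the stated closed form $(1+2\sum_{i=1}^{c(n-1)}(-1)^{i-1}\binom{n-i-1}{i-1}x^iy^{n-2i})[V(n,0)]$.

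The main obstacle I anticipate is purely bookkeeping rather than conceptual: correctly matching the indexing of the $P(l,r)$'s that appear in the tensor product decompositions of \cite{ChHasSun} with the indexing used in Lemma~\ref{3.3}, and tracking the twist parameters $r$ (the second argument), since $\O$ shifts $(l,r)\mapsto(n-l,r+l)$ and the tensor rules introduce further shifts. A secondary subtlety is the appearance of the "band" or "$V(n,r)$" terms at the boundary of the summation range (the $x^{n/2}$-type terms in the even case of Lemma~\ref{3.3}(3)), which must be handled separately; I would treat $n$ even and $n$ odd in parallel as in the proof of Lemma~\ref{3.3}, citing that lemma to avoid redoing the combinatorial identity. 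Once the decompositions are pinned down, both identities follow by substituting and simplifying, with no further input needed.
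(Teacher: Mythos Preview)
Your approach to part~(1) is essentially the paper's: decompose $\O V(1,0)\ot\O^{-1}V(1,0)$ explicitly via \cite[Proposition~5.2(1)]{ChHasSun} as $V(1,0)\oplus 2P(n-1,1)\oplus\bigl(\oplus_{i=1}^{c(n-2)}4P(2i+1,-i)\bigr)$, then rewrite $2[P(n-1,1)]=2y[V(n,0)]$ and apply Lemma~\ref{3.3}(2) to the remaining sum. Nothing more is needed there.

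For part~(2) your route diverges from the paper's. The paper simply quotes \cite[Proposition~3.7]{ChHasSun}, which gives directly
\[
\O^{\pm1}V(1,0)\ot V(n,0)\cong V(n,0)\oplus\bigl(\oplus_{i=1}^{c(n-1)}2P(2i,-i)\bigr),
\]
and then invokes Lemma~\ref{3.3}(3). Your plan instead is to read off the Loewy structure of $\O V(1,0)$, tensor the composition factors with $V(n,0)$, and reassemble. That can be made to work, but your justification contains a genuine error: you assert that because $\O^{\pm1}V(1,0)\ot V(n,0)$ is projective, its class in the Green ring ``depends only on composition-factor data''. This is false for $H_n(1,q)$: the indecomposable projectives $P(l,r)$ and $P(n-l,r+l)$ have identical composition factors (each has $2V(l,r)\oplus 2V(n-l,r+l)$) but are nonisomorphic, so the Cartan map is not injective and composition factors do not determine a projective in $r(H_n(1,q))$. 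Consequently your shortcut for $z_+[V(n,0)]=z_-[V(n,0)]$ does not go through as stated.

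The fix is short: since $H_n(1,q)$ is self-injective, the projective modules $V(1,0)\ot V(n,0)$ and $V(n-1,1)\ot V(n,0)$ are also injective, so any filtration of $\O^{\pm1}V(1,0)\ot V(n,0)$ with these as successive quotients splits. Hence $\O^{\pm1}V(1,0)\ot V(n,0)\cong (V(1,0)\ot V(n,0))\oplus 2(V(n-1,1)\ot V(n,0))$ as modules, not merely in $G_0$, and both signs give the same answer. You then still need the decomposition of $V(n-1,1)\ot V(n,0)$ from \cite{ChHasSun}, after which Lemma~\ref{3.3}(3) finishes the job exactly as in the paper. So your approach is salvageable but strictly longer than the paper's one-line citation; the parity case split you anticipate is unnecessary, since Lemma~\ref{3.3}(3) already absorbs it.
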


\begin{proof}
(1) By \cite[Proposition 5.2(1)]{ChHasSun}, we have
$$\begin{array}{c}
\O V(1,0)\ot\O^{-1}V(1,0)
\cong V(1,0)\oplus 2P(n-1,1)
\oplus(\oplus_{i=1}^{c(n-2)}4P(2i+1,-i)).\\
\end{array}$$
Hence by \cite[Corollary 5.4(2)-(3)]{ChHasLinSun} and Lemma \ref{3.3}(2), we have
$$\begin{array}{rl}
z_+z_-=&[V(1,0)]+2x[P(n-1,0)]+4\sum_{i=1}^{c(n-2)}[P(2i+1,-i)]\\
=&1+2y[V(n,0)]+4\sum_{i=1}^{c(n-2)}(-1)^{i-1}\binom{n-i-2}{i-1}x^{i+1}y^{n-1-2i}[V(n,0)]\\
=&1+(2y+4\sum_{i=1}^{c(n-2)}(-1)^{i-1}\binom{n-i-2}{i-1}x^{i+1}y^{n-1-2i})[V(n,0)].\\
\end{array}$$

(2) By \cite[Proposition 3.7]{ChHasSun}, we have
$$\begin{array}{rl}
\O^{\pm1}V(1,0)\ot V(n,0)\cong&V(n,0)\oplus(\oplus_{i=c(n)}^{n-1}2P(2n-2i,i))\\
\cong&V(n,0)\oplus(\oplus_{i=1}^{c(n-1)}2P(2i,-i)).\\
\end{array}$$
Then it follows from Lemma \ref{3.3}(3) that
$$\begin{array}{rl}
z_{\pm}[V(n,0)]=&[V(n,0)]+2\sum_{i=1}^{c(n-1)}[P(2i,-i)]\\
=&[V(n,0)]+2\sum_{i=1}^{c(n-1)}(-1)^{i-1}\binom{n-i-1}{i-1}x^iy^{n-2i}[V(n,0)]\\
=&(1+2\sum_{i=1}^{c(n-1)}(-1)^{i-1}\binom{n-i-1}{i-1}x^iy^{n-2i})[V(n,0)].\\
\end{array}$$
\end{proof}

\begin{proposition}\label{3.5}
The following set is also a $\mathbb Z$-basis of $R$:
$$\{x^iy^j, x^iy^lz_+^m, x^iy^lz_-^m|0\<i\<n-1, 0\<j\<2n-2, 0\<l\<n-2, m\>1\}.$$
\end{proposition}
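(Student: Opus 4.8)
The plan is to show that the proposed set and the $\mathbb{Z}$-basis of $R$ established in \prref{3.1} are related by an invertible (over $\mathbb{Z}$) change of coordinates, which I will carry out separately on three pieces: the ``projective part'' spanned by the $[P(l,r)]$ and $[V(n,r)]$, and the two ``string parts'' spanned by the $[\O^mV(l,r)]$ and the $[\O^{-m}V(l,r)]$. First I would count: the set $\{x^iy^j\mid 0\<i\<n-1,\,0\<j\<2n-2\}$ has $n(2n-1)$ elements, matching $|\{[V(s,r)],[P(l,r)]\mid 1\<s\<n,\,1\<l<n,\,r\in\mathbb{Z}_n\}| = n\cdot n + n(n-1) = n(2n-1)$; and for each fixed $m\>1$, the set $\{x^iy^lz_+^m\mid 0\<i\<n-1,\,0\<l\<n-2\}$ has $n(n-1)$ elements, matching $|\{[\O^mV(l,r)]\mid 1\<l\<n-1,\,r\in\mathbb{Z}_n\}|$. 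So the cardinalities are right, level by level in the power of $z_\pm$.

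The key algebraic input is that $r_p(H_n(1,q))$ is already known (from \cite[Theorem 5.9]{ChHasLinSun}) to be $\mathbb{Z}[x,y]/I$, with $\mathbb{Z}$-basis $\{[V(s,r)],[P(l,r)]\}$; hence the monomials $\{x^iy^j\mid 0\<i\<n-1,\,0\<j\<2n-2\}$, being a known $\mathbb{Z}$-basis of $\mathbb{Z}[x,y]/I$ (this is implicit in \cite{ChHasLinSun} and follows from the shape of the generators of $I$, whose leading terms are $x^n$ and $y^{2n-1}$ up to lower-order terms), form a $\mathbb{Z}$-basis of $r_p(H_n(1,q))$. That settles the first piece. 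For the string parts I proceed by induction on $m$. The base case $m=1$: I must express each $[\O V(l,r)]$ as a $\mathbb{Z}$-combination of $\{x^iy^lz_+\}$ together with elements of $r_p(H_n(1,q))$, and conversely. From the proof of \prref{3.1} one has $[\O V(l,r)] = [V(l,r)]z_+ - [P]$ for a projective $P$; multiplying $[V(l,r)]$ (an element of $r_p$, hence a $\mathbb{Z}$-combination of $x^iy^j$) and using \leref{3.4}(2) to reduce the $z_+[V(n,0)]$-type terms, one writes $[\O V(l,r)]$ in terms of $\{x^iy^lz_+ \mid 0\<l\<n-2\}$ modulo $r_p$; the reduction $V(n,r) = x^r[V(n,0)]$ and the multiplication table for $x,y$ on the $[V(n,r)]$ handle the index $r$. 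Conversely, each $x^iy^lz_+$ lies in $R$ and, reducing modulo $r_p$, lies in the span of the $[\O V(l',r')]$ — and I check the transition matrix is unimodular by a triangularity argument in the variable $l$.

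For the inductive step I would use \cite[Proposition 5.2]{ChHasSun}: $\O V(1,0)\ot\O^mV(1,0)\cong\O^{m+1}V(1,0)\oplus P$, giving $z_+^{m+1} = z_+\cdot z_+^m = [\O^{m+1}V(1,0)] + [P]$ with $[P]\in r_p$; more generally $x^iy^lz_+^{m+1}$ differs from $[\O^{m+1}V(l',r')]$-combinations by lower-$m$ terms, which by induction are already in the $\mathbb{Z}$-span of the proposed set. The only genuine subtlety — and the step I expect to be the main obstacle — is controlling the ``mixing'' between the $z_+$-tower, the $z_-$-tower, and $r_p$: a product like $z_+^m z_-$ or $z_+^a z_-^b$ is \emph{not} obviously in the span of the listed monomials, so I must verify that \leref{3.4}(1) (which gives $z_+z_- \in r_p$, in fact $z_+z_- = 1 + (\cdots)[V(n,0)]$) suffices to rewrite every such mixed product, by repeatedly replacing $z_+z_-$ by its $r_p$-value, as a $\mathbb{Z}$-combination of pure powers $z_+^m$ or $z_-^m$ with coefficients in $r_p$ — and then re-expand those coefficients in the $x^iy^j$ basis, staying within the stated ranges $0\<i\<n-1$, $0\<l\<n-2$ by using the relations defining $I$ together with \leref{3.4}(2). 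Once this closure property is checked, linear independence follows because $R$ is free of the known rank on each graded piece (by \prref{3.1}) and the transition matrix to the proposed set is block-triangular with unimodular diagonal blocks, so the proposed set is also a $\mathbb{Z}$-basis.
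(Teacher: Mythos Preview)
Your plan is essentially the paper's proof: both rest on the splitting $R=r_p(H_n(1,q))\oplus N$ (with $N$ the $\mathbb Z$-span of the $[\O^{\pm m}V(l,r)]$), invoke \cite[Corollary 5.8]{ChHasLinSun} for the projective part, establish $(\O V(1,0))^{\ot m}\cong\O^mV(1,0)\oplus P_m^+$ by induction, and then run a unitriangular (in $l$) change of basis between $\{x^iy^lz_\pm^m\}$ and $\{[\O^{\pm m}V(l,r)]\}$ for each fixed $m$ and fixed sign.

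The one place you diverge is the ``main obstacle'' you flag---reducing mixed products $z_+^az_-^b$. The paper never touches these: it passes to the quotient $\ol R=R/r_p(H_n(1,q))$ and works there. Since the projective remainder $P_m^\pm$ dies in $\ol R$, one has $\pi(z_\pm^m)=\pi([\O^{\pm m}V(1,0)])$ outright, so the positive and negative towers sit in complementary summands of $\ol R$ and never mix; your obstacle evaporates. The reason you do not need to rewrite $z_+^az_-^b$ is that such products appear in neither basis: spanning of $R$ is established by expressing the known basis elements of \prref{3.1} in the $\mathbb Z$-span of your proposed set, not by closing under multiplication. (Also, in your inductive step the ``lower-$m$ terms'' by which $x^iy^lz_+^{m+1}$ differs from a combination of $[\O^{m+1}V(l',r')]$ are in fact purely projective, not genuinely lower-$m$---another simplification the quotient viewpoint makes transparent.)
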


\begin{proof}
Let $N$ be the $\mathbb{Z}$-submodule of $R$ generated by
$$\{[\O^{\pm m}V(l,r)]|1\<l\<n-1, r\in{\mathbb Z}_n, m\>1\}.$$
Then by \cite[Corollary 5.2]{ChHasLinSun} and Proposition \ref{3.1}, we have
$R=r_p(H_n(1,q))\oplus N$ as $\mathbb Z$-modules.
Let $\ol{R}=R/r_p(H_n(1,q))$ be the quotient $\mathbb Z$-module,
and $\pi: R\ra \ol{R}$ the corresponding canonical epimorphism.
Then $\{\pi([\O^{\pm m}V(l,r)])|1\<l\<n-1, r\in{\mathbb Z}_n, m\>1\}$
is a $\mathbb Z$-basis of $\ol{R}$. By \cite[Corollary 5.8]{ChHasLinSun}, it is enough to show that
$\{\pi(x^iy^lz_{\pm}^m)|0\<i\<n-1, 0\<l\<n-2, m\>1\}$ is also a
$\mathbb Z$-basis of $\ol{R}$.
For a subset $S$ of $\ol R$, let $\langle S\rangle$ denote the $\mathbb Z$-submodule
of $\ol R$ generated by $S$.

We first show by induction on $m$ that $(\O V(1,0))^{\ot m}\cong \O^mV(1,0)\oplus P_m^+$
for some projective module $P_m^+$, where $m\>1$. For $m=1$, it is trivial.
Now let $m\>1$ and assume $(\O V(1,0))^{\ot m}\cong \O^mV(1,0)\oplus P_m^+$
for some projective module $P_m^+$. Then by \cite[Proposition 5.2]{ChHasSun}, we have
$$\begin{array}{rl}
(\O V(1,0))^{\ot(m+1)}\cong&\O V(1,0)\ot \O^mV(1,0)\oplus\O V(1,0)\ot P_m^+\\
\cong&\O^{m+1}V(1,0)\oplus Q\oplus\O V(1,0)\ot P_m^+\\
\end{array}$$
for some projective module $Q$. Note that $P_{m+1}^+=Q\oplus\O V(1,0)\ot P_m^+$ is projective
since $\O V(1,0)\ot P_m^+$ is projective. Thus,
for any $m\>1$, $(\O V(1,0))^{\ot m}\cong \O^mV(1,0)\oplus P_m^+$
for some projective module $P_m^+$. Similarly, one can show that
$(\O^{-1}V(1,0))^{\ot m}\cong \O^{-m}V(1,0)\oplus P_m^-$
for some projective module $P_m^-$, where $m\>1$.

Next, let $1\<l\<n-1$, $r\in\mathbb{Z}_n$ and $m\>1$. Then by \cite[Proposition 3.6]{ChHasSun}, we have
$$V(l,r)\ot\O^{\pm m}V(1,0)\cong\O^{\pm m}V(l,r)\oplus P$$
for some projective module $P$. Hence
$$\begin{array}{rl}
V(l,r)\ot(\O^{\pm 1}V(1,0))^{\ot m}\cong&V(l,r)\ot\O^{\pm m}V(1,0)\oplus  V(l,r)\ot P_m^{\pm}\\
\cong&\O^{\pm m}V(l,r)\oplus P_{l,r,m}^{\pm}\\
\end{array}$$
for some projective module $P^{\pm}_{l,r,m}$.
Thus, by \cite[Corollary 5.4(1) and Lemma 5.6(1)]{ChHasLinSun},
we have
$$\begin{array}{rl}
\pi([\O^{\pm m}V(l,r)])=&\pi(x^r[V(l,0)]z_{\pm}^m)\\
=&\pi(\sum_{i=0}^{[\frac{l-1}{2}]}(-1)^i\binom{l-1-i}{i}x^{r+i}y^{l-1-2i}z_{\pm}^m)\\
=&\sum_{i=0}^{[\frac{l-1}{2}]}(-1)^i\binom{l-1-i}{i}\pi(x^{r+i}y^{l-1-2i}z_{\pm}^m).\\
\end{array}$$
It follows that $\ol{R}$ is generated, as a $\mathbb Z$-module, by
$\{\pi(x^iy^lz_{\pm}^m)|0\<i\<n-1, 0\<l\<n-2, m\>1\}$.

Finally, let $0\<l\<n-2$ and $m\>1$. Let us regard $V(2,0)^{\ot 0}=V(1,0)$ and $\binom{0}{0}=1$
for simplicity. Then by \cite[Lemma 5.3]{ChHasLinSun} and the discussion above, we have
$$\begin{array}{rl}
&V(2,0)^{\ot l}\ot(\O V(1,0))^{\ot m}\\
\cong&\oplus_{j=0}^{[\frac{l}{2}]}
\frac{l-2j+1}{l-j+1}\binom{l}{j}V(l+1-2j, j)\ot(\O V(1,0))^{\ot m}\\
\cong&\oplus_{j=0}^{[\frac{l}{2}]}
\frac{l-2j+1}{l-j+1}\binom{l}{j}(\O^mV(l+1-2j, j)\oplus P^+_{l+1-2j, j,m})\\
\cong&(\oplus_{j=0}^{[\frac{l}{2}]}
\frac{l-2j+1}{l-j+1}\binom{l}{j}\O^mV(l+1-2j, j))\oplus P\\
\end{array}$$
for some projective module $P$.
Thus, for any $0\<i\<n-1$, $0\<l\<n-2$ and $m\>1$, it follows from \cite[Proposition 3.6]{ChHasSun}
and \cite[Corollary 5.4(1)]{ChHasLinSun} that
$$\begin{array}{rl}
\pi(x^iy^lz_+^m)=&\pi(\sum_{j=0}^{[\frac{l}{2}]}
\frac{l-2j+1}{l-j+1}\binom{l}{j}[\O^mV(l+1-2j, j+i)])\\
=&\pi([\O^mV(l+1, i)])+\sum\limits_{1\<j\<[\frac{l}{2}]}
\frac{l-2j+1}{l-j+1}\binom{l}{j}\pi([\O^mV(l+1-2j, j+i)]).\\
\end{array}$$
It follows that the set $\{\pi(x^iy^lz_+^m)|0\<i\<n-1, 0\<l\<n-2, m\>1\}$
is linearly independent over $\mathbb Z$.
Similarly, one can show that the set $\{\pi(x^iy^lz_-^m)|0\<i\<n-1, 0\<l\<n-2, m\>1\}$
is linearly independent over $\mathbb Z$ too.
Moreover, we have
$$\begin{array}{rl}
&\langle\{\pi([\O^mV(l,r)])|1\<l\<n-1, r\in\mathbb{Z}_n, m\>1\}\rangle\\
=&\langle\{\pi(x^iy^lz_+^m)|0\<i\<n-1, 0\<l\<n-2, m\>1\}\rangle\\
\end{array}$$
and
$$\begin{array}{rl}
&\langle\{\pi([\O^{-m}V(l,r)])|1\<l\<n-1, r\in\mathbb{Z}_n, m\>1\}\rangle\\
=&\langle\{\pi(x^iy^lz_-^m)|0\<i\<n-1, 0\<l\<n-2, m\>1\}\rangle.\\
\end{array}$$
Therefore, $\{\pi(x^iy^lz_{\pm}^m)|0\<i\<n-1, 0\<l\<n-2, m\>1\}$
is a $\mathbb Z$-basis of $\ol R$. This completes the proof.
\end{proof}

Now let $w_{m,\eta}=[M_m(1,0,\eta)]$ in $r(H_n(1,q))$ for any $m\>1$ and $\eta\in\ol{k}$.

\begin{lemma}\label{3.6}
Let $m, s\>1$ and $\eta, \a\in\ol{k}$. Then we have the following relations in $r(H_n(1,q))$:\\
$(1)$ $w_{m,\eta}[V(n,0)]=m(1+\sum_{i=1}^{c(n-1)}(-1)^{i-1}\binom{n-1-i}{i-1}x^iy^{n-2i})[V(n,0)]$.\\
$(2)$ $z_+w_{m,\eta}=(\sum_{i=1}^{c(n-1)}(-1)^{i-1}\binom{n-1-i}{i-1}x^{i}y^{n-2i})w_{m,\eta}+mx[V(n,0)]^2$.\\
$(3)$ $z_-w_{m,\eta}=(\sum_{i=1}^{c(n-1)}(-1)^{i-1}\binom{n-i-1}{i-1}x^{i}y^{n-2i})w_{m,\eta}$\\
\mbox{\hspace{1.7cm}}$+m(y+\sum_{i=1}^{c(n-2)}(-1)^{i-1}\binom{n-i-2}{i-1}x^{i+1}y^{n-1-2i})[V(n,0)]$.\\
$(4)$ If $\eta\neq\a$, then $w_{m,\eta}w_{s,\a}=msx[V(n,0)]^2$.\\
$(5)$ If $m\<s$, then
$$\begin{array}{c}
w_{m,\eta}w_{s,\eta}=w_{m,\eta}(1+\sum_{i=1}^{c(n-1)}(-1)^{i-1}\binom{n-i-1}{i-1}x^iy^{n-2i})
+(s-1)mx[V(n,0)]^2.\\
\end{array}$$
\end{lemma}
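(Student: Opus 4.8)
The plan is to prove all five identities by a single method. Each has the form $[A][B]=[A\ot B]$ with $B=M_m(1,0,\eta)$ a band module, so I first decompose the module $A\ot B$ into a direct sum of indecomposables using the tensor--product formulas of \cite{ChHasSun} --- the cases needed being a band module tensored with the projective--injective simple $V(n,0)$, with the string modules $\O^{\pm1}V(1,0)$, and with another band module --- and then rewrite each summand in the $\mathbb Z$--basis of Proposition~\ref{3.5}. The projective summands $P(l,r)$ are treated exactly as in the proof of Lemma~\ref{3.3}: one uses $[P(l,r)]=x^r[P(l,0)]$ together with the expansions of $[P(l,0)]$ and $[V(l,0)]$ in \cite[Corollary 5.4, Lemma 5.6]{ChHasLinSun}, the rearrangement of Lemma~\ref{3.2}, and Lemmas~\ref{3.3} and \ref{3.4} to collapse recurring blocks, after which they combine into scalar multiples of $[V(n,0)]^2$ and $[V(n,0)]$. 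Any non--projective summand is a band module, which I identify with a product $[V(l',r')]\,w_{m,\eta}$; here the two translations $[V(1,0)]=1$ (the trivial module) and $[V(n-1,1)]=x[V(n-1,0)]=\sum_{i=1}^{c(n-1)}(-1)^{i-1}\binom{n-i-1}{i-1}x^iy^{n-2i}$ are what link the basis of Proposition~\ref{3.5} with the closed forms in the statement.

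Carrying this out case by case: for (1), $M_m(1,0,\eta)\ot V(n,0)$ is projective (tensor with a projective module), so its class lies in $r_p(H_n(1,q))$ and is determined by composition factors; since the exact sequences $0\ra M_i(1,0,\eta)\ra M_m(1,0,\eta)\ra M_{m-i}(1,0,\eta)\ra 0$ of \seref{1.3} and the structure of $M_1(1,0,\eta)$ give $[M_m(1,0,\eta)]=m([V(1,0)]+[V(n-1,1)])$ in the Grothendieck group, one gets $w_{m,\eta}[V(n,0)]=m(1+[V(n-1,1)])[V(n,0)]$, which is the assertion. For (4) with $\eta\neq\a$, the product $M_m(1,0,\eta)\ot M_s(1,0,\a)$ is projective by \cite{ChHasSun}; reducing to $m=s=1$ by composition factors and applying \cite[Corollary 5.4, Lemma 5.6]{ChHasLinSun} and Lemma~\ref{3.3}(1) gives $ms\,x[V(n,0)]^2$. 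For (2) and (3) one feeds in the decomposition of $\O^{\pm1}V(1,0)\ot M_m(1,0,\eta)$: its non--projective part contributes $x[V(n-1,0)]\,w_{m,\eta}$, and its projective summands, whose multiplicities are linear in $m$, sum after Lemmas~\ref{3.3} and \ref{3.4} to $mx[V(n,0)]^2$ in (2) and to the two remaining terms in (3); alternatively, tensoring the exact sequences above with $\O^{\pm1}V(1,0)$ and splitting off projectives reduces these two parts to the case $m=1$ apart from the linear--in--$m$ projective term. For (5) with $m\<s$, the decomposition of $M_m(1,0,\eta)\ot M_s(1,0,\eta)$ in \cite{ChHasSun} has non--projective part accounting for $w_{m,\eta}(1+x[V(n-1,0)])$ and projective part of total class $(s-1)m\,x[V(n,0)]^2$.

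The conceptual content is therefore modest; the main obstacle is the bookkeeping. First, one has to extract from \cite{ChHasSun} the precise tensor--product decompositions of band modules, which are case--dependent --- on $\min(m,s)$, on the parameters $(l,r)$ and their shifts by powers of $q$, and sometimes on the parity of $n$ --- and which carry the parameter change $\eta\mapsto-\eta q^{l}$ coming from $\O$ and the antipode; one must check that these shifts always recombine so as to produce $w_{m,\eta}$ rather than some $w_{m,\eta'}$ with $\eta'\neq\eta$. Second, after substituting one is left with alternating sums of binomial coefficients that must be matched against the closed forms in the statement --- the same kind of manipulation as in the proof of Lemma~\ref{3.3}, using Lemma~\ref{3.2} and the summation identities of \cite[Section 5]{ChHasLinSun}, but longer because of the extra multiplicity parameter, and (as in Lemma~\ref{3.3}(3)) the even and odd values of $n$ are handled separately.
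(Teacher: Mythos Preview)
Your plan is correct and follows essentially the same route as the paper: cite the explicit tensor-product decompositions from \cite{ChHasSun} for $M_m(1,0,\eta)$ against $V(n,0)$, $\O^{\pm1}V(1,0)$, and $M_s(1,0,\a)$, then rewrite the projective summands via Lemma~\ref{3.3} and \cite[Corollary 5.4, Lemma 5.6]{ChHasLinSun}, identifying the non-projective band summand with $x[V(n-1,0)]\,w_{m,\eta}$ through \cite[Theorem 3.16]{ChHasSun}. The one genuine variation is your treatment of (1): the paper simply quotes the decomposition of $M_m(1,0,\eta)\ot V(n,0)$ from \cite[Theorem 3.17]{ChHasSun}, whereas you observe that tensoring any short exact sequence with the projective--injective $V(n,0)$ yields a split sequence (since $H_n(1,q)$ is symmetric), so $w_{m,\eta}[V(n,0)]$ is computed by the image of $M_m(1,0,\eta)$ in the Grothendieck group --- a cleaner argument that avoids one citation, though you should phrase it via this splitting rather than the bare claim ``determined by composition factors'', since for this algebra distinct indecomposable projectives $P(l,r)$ and $P(n-l,r+l)$ do share composition factors.
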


\begin{proof}
(1) By \cite[Theorem 3.17]{ChHasSun}, we have
$$\begin{array}{rl}
M_m(1,0,\eta)\ot V(n,0)\cong&mP(n,0)\oplus(\oplus_{i=c(n)}^{n-1}mP(2n-2i,i))\\
\cong&mV(n,0)\oplus(\oplus_{i=1}^{c(n-1)}mP(2i,-i)).\\
\end{array}$$
Then by Lemma \ref{3.3}(3), one gets that
$$\begin{array}{rl}
w_{m,\eta}[V(n,0)]=&m[V(n,0)]+\sum_{i=1}^{c(n-1)}m[P(2i,-i)]\\
=&m(1+\sum_{i=1}^{c(n-1)}(-1)^{i-1}\binom{n-i-1}{i-1}x^iy^{n-2i})[V(n,0)].\\
\end{array}$$

(2) By \cite[Theorem 3.16]{ChHasSun}, we have
$$\begin{array}{rl}
&V(n-1,1)\ot M_m(1,0,\eta)\\
\cong&M_m(n-1,1,-\eta q)\oplus(\oplus_{i=c(n-1)}^{n-2}mP(2n-1-2i, i+1))\\
\cong&M_m(n-1,1,-\eta q)\oplus(\oplus_{i=1}^{c(n-2)}mP(2i+1, -i)).\\
\end{array}$$
Then by \cite[Proposition 5.6(1)]{ChHasSun}, one gets that
$$\begin{array}{rl}
&\O V(1,0)\ot M_m(1,0,\eta)\\
\cong&M_m(n-1,1,-\eta q)\oplus mP(1,0)\oplus(\oplus_{i=1}^{c(n-2)}2mP(2i+1, -i))\\
\cong&V(n-1,1)\ot M_m(1,0,\eta)\oplus(\oplus_{i=0}^{c(n-2)}mP(2i+1, -i)).\\
\end{array}$$
Thus, it follows from \cite[Corollary 5.4(1), Lemma 5.6(1)]{ChHasLinSun} and Lemma \ref{3.3}(1) that
$$\begin{array}{rl}
z_+w_{m,\eta}=&x[V(n-1,0)]w_{m,\eta}+\sum_{i=0}^{c(n-2)}m[P(2i+1, -i)]\\
=&(\sum_{i=0}^{[\frac{n-2}{2}]}(-1)^i\binom{n-2-i}{i}x^{i+1}y^{n-2-2i})w_{m,\eta}+mx[V(n,0)]^2\\
=&(\sum_{i=1}^{c(n-1)}(-1)^{i-1}\binom{n-1-i}{i-1}x^{i}y^{n-2i})w_{m,\eta}+mx[V(n,0)]^2.\\
\end{array}$$

(3) By \cite[Corollary 5.7(1)]{ChHasSun} and the proof of (2), we have
$$\begin{array}{rl}
&\O^{-1}V(1,0)\ot M_m(1,0,\eta)\\
\cong&M_m(n-1,1,-\eta q)\oplus mP(n-1,1)\oplus(\oplus_{i=1}^{c(n-2)}2mP(2i+1, -i))\\
\cong&V(n-1,1)\ot M_m(1,0,\eta)\oplus mP(n-1,1)\oplus(\oplus_{i=1}^{c(n-2)}mP(2i+1, -i)).\\
\end{array}$$
Then by \cite[Corollary 5.4(1)-(3), Lemma 5.6(1)]{ChHasLinSun} and Lemma \ref{3.3}(2), one gets that
$$\begin{array}{rl}
z_-w_{m,\eta}=&x[V(n-1,0)]w_{m,\eta}+mx[P(n-1,0)]+\sum_{i=1}^{c(n-2)}m[P(2i+1, -i)]\\
=&x[V(n-1,0)]w_{m,\eta}+my[V(n,0)]\\
&+m\sum_{i=1}^{c(n-2)}(-1)^{i-1}\binom{n-i-2}{i-1}x^{i+1}y^{n-1-2i}[V(n,0)]\\
=&(\sum_{i=1}^{c(n-1)}(-1)^{i-1}\binom{n-1-i}{i-1}x^{i}y^{n-2i})w_{m,\eta}\\
&+m(y+\sum_{i=1}^{c(n-2)}(-1)^{i-1}\binom{n-i-2}{i-1}x^{i+1}y^{n-1-2i})[V(n,0)].\\
\end{array}$$

(4) Assume that $\eta\neq\a$. Then by \cite[Lemma 5.15]{ChHasSun}, we have
$$M_m(1,0,\eta)\ot M_s(1,0,\a)\cong\oplus_{i=1}^{c(n)}msP(2i-1,1-i)
\cong\oplus_{i=0}^{c(n-2)}msP(2i+1,-i).$$
Thus, it follows from Lemma \ref{3.3}(1) that $w_{m,\eta}w_{s,\a}=msx[V(n,0)]^2$.

(5) Assume that $m\<s$. Then by \cite[Lemma 5.21]{ChHasSun} and the proof of (2), one has
$$\begin{array}{rl}
&M_m(1,0,\eta)\ot M_s(1,0,\eta)\\
\cong&M_m(1,0,\eta)\oplus M_m(n-1,1,-\eta q)\oplus(s-1)mP(1,0)\oplus(\oplus_{i=1}^{c(n-2)}msP(2i+1, -i))\\
\cong&M_m(1,0,\eta)\oplus V(n-1,1)\ot M_m(1,0,\eta)\oplus(\oplus_{i=0}^{c(n-2)}(s-1)mP(2i+1, -i)).\\
\end{array}$$
Thus, it follows from \cite[Corollary 5.4(1), Lemma 5.6(1)]{ChHasLinSun} and Lemma \ref{3.3}(1) that
$$\begin{array}{rl}
w_{m,\eta}w_{s,\eta}=&w_{m,\eta}+x[V(n-1,0)]w_{m,\eta}+\sum_{i=0}^{c(n-2)}(s-1)m[P(2i+1, -i)]\\
%=&w_{m,\eta}+(\sum_{i=0}^{[\frac{n-2}{2}]}(-1)^i\binom{n-2-i}{i}x^{i+1}y^{n-2-2i})w_{m,\eta}+mx[V(n,0)]^2\\
=&(1+\sum_{i=1}^{c(n-1)}(-1)^{i-1}\binom{n-1-i}{i-1}x^{i}y^{n-2i})w_{m,\eta}+(s-1)mx[V(n,0)]^2.\\
\end{array}$$
\end{proof}

\begin{proposition}\label{3.7}
$r(H_n(1,q))$ is generated, as a ring, by $$\{x, y, z_+, z_-, w_{m,\eta}|m\>1, \eta\in\ol{k}\}.$$
\end{proposition}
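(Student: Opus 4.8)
The plan is to show that every indecomposable $H_n(1,q)$-module has its class in the subring $r'$ generated by $\{x,y,z_+,z_-,w_{m,\eta}\}$, since by the last paragraph of \seref{2} the classes of indecomposables form a $\mathbb Z$-basis of $r(H_n(1,q))$. The indecomposables fall into four families: simples $V(l,r)$, non-simple projectives $P(l,r)$, string modules $\O^{\pm m}V(l,r)$, and band modules $M_m(l,r,\eta)$. For the first three families this is already essentially done: $\prref{3.1}$ exhibits a $\mathbb Z$-basis of the subring $R$ consisting precisely of $\{[V(s,r)],[P(l,r)],[\O^{\pm m}V(l,r)]\}$, and $R$ is generated by $x,y,z_+,z_-$, so $R\subseteq r'$. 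Thus it remains only to produce each band-module class $[M_m(l,r,\eta)]$ as a polynomial in the generators.

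The key step is therefore to reduce an arbitrary band module $M_m(l,r,\eta)$ to the ``base'' band modules $M_m(1,0,\eta)$ whose classes are the $w_{m,\eta}$. First I would handle the twist in the first parameter $r$: multiplying by a suitable power of $x=[V(1,1)]$ shifts $r$, using the tensor-product rule for $V(1,1)\ot M_m(l,r,\eta)$ from \cite{ChHasSun} (analogous to the identity $[\O^{\pm m}V(l,r)]=x^r[V(l,0)]z_\pm^m$ used in the proof of $\prref{3.5}$), so it suffices to treat $M_m(l,0,\eta)$. Next I would raise the first parameter $l$ to $1$: tensoring $M_m(1,0,\eta)$ with $V(l,r)$ and invoking the relevant decomposition rule from \cite{ChHasSun} (cf.\ \cite[Theorem 3.16]{ChHasSun}, used in $\leref{3.6}(2)$), one gets $V(l,r)\ot M_m(1,0,\eta)\cong M_m(l,r',\eta')\oplus(\text{projective})$ up to a parameter shift; since the projective summand lies in $r_p(H_n(1,q))\subseteq R\subseteq r'$ and $[V(l,r)]\in R$, this expresses $[M_m(l,r',\eta')]$ in terms of $w_{m,\eta}$ and elements of $r'$. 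Combining these two reductions shows $[M_m(l,r,\eta)]\in r'$ for all admissible $l,r,m,\eta$.

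Putting the pieces together: $r'$ contains $R$ (hence all simple, projective and string classes) and all band classes $[M_m(l,r,\eta)]$, so $r'$ contains the full $\mathbb Z$-basis $\{[V]\mid V\in\mathrm{ind}(H_n(1,q))\}$ of $r(H_n(1,q))$; therefore $r'=r(H_n(1,q))$, which is the assertion. I expect the main obstacle to be bookkeeping rather than conceptual: one must cite exactly the right tensor-product decomposition from \cite{ChHasSun} for $V(l,r)\ot M_m(1,0,\eta)$ and track the precise shifts in the indices $(l,r,\eta)$ and the multiplicities $c(\cdot)$ so that the projective remainder is genuinely in $r_p(H_n(1,q))$; the relations in $\leref{3.6}$ show these computations are routine but index-heavy. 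No new decomposition rule is needed, only careful assembly of results already established in the excerpt and in \cite{ChHasSun, ChHasLinSun}.
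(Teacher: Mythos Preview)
Your proposal is correct and follows essentially the same route as the paper: reduce to the band modules via \prref{3.1}, then use \cite[Theorem~3.16]{ChHasSun} to write $V(l,r)\ot M_m(1,0,\eta)\cong M_m(l,r,\eta q^{1-l}(l)_q)\oplus(\text{projective})$, so that $[M_m(l,r,\eta q^{1-l}(l)_q)]=[V(l,r)]w_{m,\eta}-(\text{something in }r_p(H_n(1,q)))\in r'$. The paper does the $r$- and $l$-shifts in a single tensor with $V(l,r)$ rather than your two-step reduction, and the only point you leave as ``bookkeeping'' but which is actually essential is that the map $\eta\mapsto\eta q^{1-l}(l)_q$ is a bijection on $\ol{k}$ for each fixed $l$, so that every band parameter is hit.
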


\begin{proof}
Let $R'$ be the subring of $r(H_n(1,q))$ generated by $\{x,y, z_+, z_-, w_{m,\eta}|m\>1, \eta\in\ol{k}\}$.
Then $R\subseteq R'$. By Proposition \ref{3.1} and the classification of finite dimensional modules over $H_n(1,q)$
(see \cite{Ch4} or \seref{2}), it is enough to show that $[M_m(l,r,\eta)]\in R'$ for all
$1\<l\<n-1$, $m\>1$, $r\in\mathbb{Z}_n$ and $\eta\in\ol{k}$.
In fact, by \cite[Theorem 3.16]{ChHasSun}, we have
$$V(l,r)\ot M_m(1,0,\eta)\cong M_m(l,r,\eta q^{1-l}(l)_q)\oplus(\oplus_{c(l)\<i\<l-1}mP(n+l-2i,r+i)).$$
Then it follows from Proposition \ref{3.1} that
$$\begin{array}{c}
[M_m(l,r,\eta  q^{1-l}(l)_q)]=[V(l,r)]w_{m,\eta}-\sum_{c(l)\<i\<l-1}m[P(n+l-2i,r+i)]\in R'\\
\end{array}$$
for all $1\<l\<n-1$, $m\>1$, $r\in\mathbb{Z}_n$ and $\eta\in\ol{k}$.
Thus, the proposition follows from the fact that the map $\ol{k}\ra\ol{k}$,
$\eta\mapsto\eta q^{1-l}(l)_q$, is a bijection for any fixed $1\<l\<n-1$.
\end{proof}

\begin{proposition}\label{3.8}
The following set is a $\mathbb Z$-basis of $r(H_n(1,q))$:
$$\left\{x^iy^j, x^iy^lz_+^m, x^iy^lz_-^m, x^iy^lw_{m,\eta}
\left|\begin{array}{l}
0\<i\<n-1, 0\<j\<2n-2,\\
0\<l\<n-2, m\>1, \eta\in\ol{k}\\
\end{array}\right\}\right..$$
\end{proposition}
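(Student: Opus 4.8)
The plan is to prove that the displayed set is a $\mathbb{Z}$-basis of $r(H_n(1,q))$ by combining the spanning result with a linear independence argument, using the direct sum decomposition of $r(H_n(1,q))$ coming from the classification of indecomposable modules. First I would recall from \seref{2} that $r(H_n(1,q))$ is free over $\mathbb{Z}$ with basis the isomorphism classes of indecomposables: the simples $[V(s,r)]$, the projectives $[P(l,r)]$, the string modules $[\Omega^{\pm m}V(l,r)]$, and the band modules $[M_m(l,r,\eta)]$. By Proposition~\ref{3.1}, the subring $R$ already has $\{[V(s,r)],[P(l,r)],[\Omega^{\pm m}V(l,r)]\}$ as a basis, so as a $\mathbb{Z}$-module we have $r(H_n(1,q)) = R \oplus B$, where $B$ is the free $\mathbb{Z}$-submodule spanned by the band classes $\{[M_m(l,r,\eta)] \mid 1\<l\<n-1,\ m\>1,\ r\in\mathbb{Z}_n,\ \eta\in\ol{k}\}$.

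Next I would show the displayed set spans $r(H_n(1,q))$. By Proposition~\ref{3.5}, the elements $x^iy^j,\ x^iy^lz_+^m,\ x^iy^lz_-^m$ in the stated ranges form a $\mathbb{Z}$-basis of $R$, so it remains to see that together with the $x^iy^lw_{m,\eta}$ they reach all of $B$. The proof of Proposition~\ref{3.7} gives, for each $1\<l\<n-1$, $r\in\mathbb{Z}_n$, the relation $V(l,r)\ot M_m(1,0,\eta)\cong M_m(l,r,\eta q^{1-l}(l)_q)\oplus(\text{projectives})$, hence modulo $R$ the class $[M_m(l,r,\eta q^{1-l}(l)_q)]$ equals $[V(l,r)]w_{m,\eta}$ up to elements of $R$. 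Since $\eta\mapsto\eta q^{1-l}(l)_q$ is a bijection of $\ol{k}$ and $[V(l,r)] = x^r[V(l,0)]$ expands via \cite[Corollary 5.4(1), Lemma 5.6(1)]{ChHasLinSun} as a $\mathbb{Z}$-combination of the $x^iy^l$ with $0\<l\<n-2$ (together with multiples of $[V(n,0)]\in r_p\subset R$), every band class lies in the $\mathbb{Z}$-span of $R\cup\{x^iy^lw_{m,\eta}\}$; combined with Proposition~\ref{3.5} this shows the displayed set spans.

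For linear independence, I would project modulo $R$: let $\pi\colon r(H_n(1,q))\to \ol{B}:=r(H_n(1,q))/R$, which is free with basis $\{\pi([M_m(l,r,\eta)])\}$. On the one hand, $\pi$ kills all the $x^iy^j,\ x^iy^lz_\pm^m$ since those lie in $R$. On the other hand, the computation above gives, for fixed $m$ and $\eta$, that $\pi(x^iy^lw_{m,\eta})$ is a $\mathbb{Z}$-combination of $\{\pi([M_m(l',r,\eta q^{1-l'}(l')_q)])\}$ whose "leading term" — analogous to the argument in Proposition~\ref{3.5} for $z_+$ — is $\pi([M_m(l+1,i,\eta q^{-l}(l+1)_q)])$ with coefficient $1$ when one writes $x^iy^l$ in terms of the $[V(l+1,i)]$ via \cite[Lemma 5.3]{ChHasLinSun}. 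This triangularity shows $\{\pi(x^iy^lw_{m,\eta})\mid 0\<i\<n-1,\ 0\<l\<n-2,\ m\>1,\ \eta\in\ol{k}\}$ is $\mathbb{Z}$-linearly independent in $\ol{B}$ and in fact spans it (since the band classes for a fixed $m$ are indexed by $l\in\{1,\dots,n-1\}$, $r\in\mathbb{Z}_n$, $\eta\in\ol{k}$, matching the $n(n-1)$ pairs $(i,l)$ times the $\ol{k}$-parameter). Therefore any $\mathbb{Z}$-linear dependence among the full displayed set, after applying $\pi$, forces all coefficients of the $w$-terms to vanish, and what remains is a dependence inside $R$ among the basis of Proposition~\ref{3.5}, hence trivial.

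The main obstacle I anticipate is the bookkeeping in the linear independence step: one must carefully track that the change of basis from $\{x^iy^l\}$ to $\{[V(l+1,i)]\}$ (via \cite[Lemma 5.3]{ChHasLinSun}) is unitriangular with respect to a suitable ordering of $l$, and simultaneously that the shift $\eta\mapsto\eta q^{1-l}(l)_q$ does not cause collisions between band classes with different $l$ but the same underlying parameter — but since $M_m(l,r,\eta)$ for different $l$ are non-isomorphic indecomposables regardless of $\eta$, the $\pi$-images are genuinely distinct basis vectors, so the triangular structure in the $l$-variable is enough. Once the ordering is fixed, the argument is a routine (if slightly tedious) verification paralleling the proof of Proposition~\ref{3.5}, now carried out one $\Omega$-free "$M_m$-layer" at a time for each $m\>1$ and $\eta\in\ol{k}$.
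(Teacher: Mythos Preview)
Your proposal is correct and follows essentially the same approach as the paper: decompose $r(H_n(1,q))=R\oplus R_0$ with $R_0$ the span of the band classes, project to $r(H_n(1,q))/R$, use the relation $V(l,r)\ot M_m(1,0,\eta)\cong M_m(l,r,\eta q^{1-l}(l)_q)\oplus(\text{projective})$ together with \cite[Lemma 5.6(1)]{ChHasLinSun} to obtain spanning, and then expand $V(2,0)^{\ot l}\ot M_m(1,0,\eta)$ via \cite[Lemma 5.3]{ChHasLinSun} and \cite[Theorem 3.16]{ChHasSun} to extract the unitriangular leading term $\pi([M_m(l+1,i,\eta q^{-l}(l+1)_q)])$ for linear independence. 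Your remark that distinct values of $l$ give non-isomorphic band modules regardless of the $\eta$-shift is exactly the point ensuring that the triangularity in $l$ suffices; the paper leaves this implicit.
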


\begin{proof}
It is similar to Proposition \ref{3.5}. Let $R_0$ be the $\mathbb Z$-submodule of $r(H_n(1,q))$
generated by $\{[M_m(l,r,\eta)]|m\>1, 1\<l\<n-1, r\in{\mathbb Z}_n, \eta\in\ol{k}\}$.
Then by Proposition \ref{3.1} and the classification of finite dimensional modules over $H_n(1,q)$
(see \cite{Ch4} or \seref{2}), $r(H_n(1,q))=R\oplus R_0$ as $\mathbb Z$-modules.
Let $r(H_n(1,q))/R$ be the quotient $\mathbb Z$-module of $r(H_n(1,q))$ modulo $R$,
and $\pi: r(H_n(1,q))\ra r(H_n(1,q))/R$ the corresponding canonical projection.
Then $r(H_n(1,q))/R$ is a free $\mathbb Z$-module with a $\mathbb Z$-basis given by
$$\{\pi([M_m(l,r,\eta q^{1-l}(l)_q)])|m\>1, 1\<l\<n-1, r\in{\mathbb Z}_n, \eta\in\ol{k}\}.$$

Let $m\>1$, $1\<l\<n-1$, $r\in{\mathbb Z}_n$ and $\eta\in\ol{k}$.
Then by \cite[Corollary 5.4(1), Lemma 5.6(1)]{ChHasLinSun} and the proof of Proposition \ref{3.7}, we have
$$\begin{array}{rl}
\pi([M_m(l,r,\eta  q^{1-l}(l)_q)])=&\pi([V(l,r)]w_{m,\eta})\\
=&\sum_{i=0}^{[\frac{l-1}{2}]}(-1)^i\binom{l-1-i}{i}\pi(x^{r+i}y^{l-1-2i}w_{m,\eta}).\\
\end{array}$$
It follows that $r(H_n(1,q))/R$ is generated, as a $\mathbb Z$-module, by
$$\{\pi(x^iy^lw_{m,\eta})|0\<i\<n-1, 0\<l\<n-2,  m\>1, \eta\in\ol{k}\}.$$

Now let $0\<i\<n-1$, $0\<l\<n-2$, $m\>1$ and $\eta\in\ol{k}$. Then by \cite[Lemma 5.3]{ChHasLinSun}
and \cite[Theorem 3.16]{ChHasSun}, we have
$$\begin{array}{rl}
&V(2,0)^{\ot l}\ot M_m(1,0, \eta)\\
\cong&\oplus_{j=0}^{[\frac{l}{2}]}
\frac{l-2j+1}{l-j+1}\binom{l}{j}V(l+1-2j, j)\ot M_m(1,0, \eta)\\
\cong&\oplus_{j=0}^{[\frac{l}{2}]}
\frac{l-2j+1}{l-j+1}\binom{l}{j}M_m(l+1-2j, j, \eta q^{2j-l}(l+1-2j)_q)\oplus P\\
\end{array}$$
for some projective module $P$.
Thus, by \cite[Corollary 5.4(1)]{ChHasLinSun} and \cite[Lemma 3.8]{ChHasSun}, one gets
$$\begin{array}{rl}
\pi(x^iy^lw_{m,\eta})=&\pi(\sum_{j=0}^{[\frac{l}{2}]}
\frac{l-2j+1}{l-j+1}\binom{l}{j}[M_m(l+1-2j, i+j, \eta q^{2j-l}(l+1-2j)_q)])\\
=&\pi([M_m(l+1, i, \eta q^{-l}(l+1)_q)])\\
&+\sum\limits_{1\<j\<[\frac{l}{2}]}
\frac{l-2j+1}{l-j+1}\binom{l}{j}\pi([M_m(l+1-2j, i+j, \eta q^{2j-l}(l+1-2j)_q)]).\\
\end{array}$$
It follows that the set $\{\pi(x^iy^lw_{m,\eta})|0\<i\<n-1, 0\<l\<n-2, m\>1, \eta\in\ol{k}\}$
is linearly independent over $\mathbb Z$, and so it is a $\mathbb Z$-basis of $r(H_n(1,q))/R$.
Thus, the proposition follows from Proposition \ref{3.5}.
\end{proof}

Let $\mathbb{Z}[x,y,z_+,z_-]$ be the polynomial ring in variables $x, y, z_+, z_-$.
Define the polynomials $f_1(x,y)$, $f_2(x,y)$, $f_3(x,y)$ and $f_4(x,y)$ in $\mathbb{Z}[x,y]\subset\mathbb{Z}[x,y,z_+,z_-]$ by
$$\begin{array}{l}
f_1(x,y)=\sum_{i=0}^{c(n-2)}(-1)^i\binom{n-1-i}{i}x^iy^{n-1-2i},\\
f_2(x,y)=\sum_{i=0}^{c(n-1)}(-1)^i\frac{n}{n-i}\binom{n-i}{i}x^iy^{n-2i}-2,\\
f_3(x,y)=\sum_{i=1}^{c(n-2)}(-1)^{i-1}\binom{n-i-2}{i-1}x^{i+1}y^{n-1-2i},\\
f_4(x,y)=\sum_{i=1}^{c(n-1)}(-1)^{i-1}\binom{n-i-1}{i-1}x^iy^{n-2i}.\\
\end{array}$$
Let $I$ be the ideal of $\mathbb{Z}[x,y,z_+,z_-]$ generated by the following elements:
$$\begin{array}{c}
x^n-1,\  f_1(x,y)f_2(x,y),\  z_+z_--1-f_1(x,y)(2y+4f_3(x,y)),\\
 f_1(x,y)(z_+-1-2f_4(x,y)),\ f_1(x,y)(z_+-z_-).\\
 \end{array}$$

\begin{proposition}\label{3.9}
$R$ is isomorphic to the quotient ring $\mathbb{Z}[x,y,z_+,z_-]/I$.
\end{proposition}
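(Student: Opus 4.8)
The plan is to build the isomorphism explicitly. Define a ring homomorphism $\phi\colon\mathbb{Z}[x,y,z_+,z_-]\to R$ by $x\mapsto x$, $y\mapsto y$, $z_+\mapsto z_+$, $z_-\mapsto z_-$; it is surjective because $R$ is by definition generated by these four elements. The first step is to verify $I\subseteq\ker\phi$, i.e.\ that each of the five listed generators of $I$ maps to $0$ in $R$. The generators $x^n-1$ and $f_1(x,y)f_2(x,y)$ already vanish in $r_p(H_n(1,q))=\mathbb{Z}[x,y]/(x^n-1,f_1f_2)\subseteq R$ by \cite[Theorem 5.9]{ChHasLinSun}. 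For the three remaining generators I would use the identity $[V(n,0)]=f_1(x,y)$ from \cite[Lemma 5.6]{ChHasLinSun} to rephrase Lemma~\ref{3.4}: part~(1) becomes $z_+z_-=1+f_1(2y+4f_3)$, and part~(2) becomes $z_+f_1=(1+2f_4)f_1$ together with $z_+f_1=z_-f_1$, which are precisely those three generators. Hence $\phi$ descends to a surjective ring homomorphism $\bar\phi\colon Q:=\mathbb{Z}[x,y,z_+,z_-]/I\to R$.

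It then remains to prove that $\bar\phi$ is injective, and for this it suffices to show that $Q$ is spanned, as a $\mathbb{Z}$-module, by the image of the set
$$S=\{x^iy^j,\ x^iy^lz_+^m,\ x^iy^lz_-^m\mid 0\<i\<n-1,\ 0\<j\<2n-2,\ 0\<l\<n-2,\ m\>1\};$$
indeed, $\bar\phi$ maps $S$ onto the $\mathbb{Z}$-basis of $R$ produced by Proposition~\ref{3.5}, so any element of $\ker\bar\phi$, written as a $\mathbb{Z}$-combination of elements of $S$, must have all coefficients zero by linear independence of $\bar\phi(S)$. To establish the spanning claim I would reduce an arbitrary monomial $x^iy^jz_+^az_-^b$ in $Q$ in three rounds. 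First, the relation $z_+z_-=1+f_1(2y+4f_3)$ rewrites $z_+^az_-^b$ (for $a,b\>1$) as a polynomial in $x,y$ times a single power $z_+^{a-b}$ or $z_-^{b-a}$, so every element of $Q$ becomes a $\mathbb{Z}[x,y]$-combination of $1$ and the powers $z_+^m,z_-^m$ with $m\>1$. Second, from $f_1z_+=(1+2f_4)f_1$ and $f_1z_+=f_1z_-$ one derives $f_1z_\pm^m=(1+2f_4)^mf_1\in\mathbb{Z}[x,y]$ for all $m\>1$; since the $y$-leading term of $f_1$ is $y^{n-1}$, solving this identity for $y^{n-1}z_\pm^m$ expresses it as an element of $\mathbb{Z}[x,y]$ plus a $\mathbb{Z}$-combination of monomials $x^iy^{n-1-2i}z_\pm^m$ with $i\>1$, hence of $y$-degree $\<n-3$, and an induction on $y$-degree then reduces every $x^iy^jz_\pm^m$ with $j\>n-1$ to $\mathbb{Z}[x,y]$-monomials plus monomials $x^ay^bz_\pm^m$ with $0\<b\<n-2$. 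Third, the resulting polynomial contributions lie in $\mathbb{Z}[x,y]/(x^n-1,f_1f_2)=r_p(H_n(1,q))$, which has $\mathbb{Z}$-basis $\{x^iy^j\mid 0\<i\<n-1,\ 0\<j\<2n-2\}$ by \cite[Corollary 5.8]{ChHasLinSun}, while $x^n=1$ trims the $x$-powers appearing in the $z_\pm^m$-terms. Taken together these rounds show $S$ spans $Q$, which completes the proof.

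The hard part will be the last round: arranging the reductions so that the five relations of $I$ genuinely suffice to bring every monomial into the $\mathbb{Z}$-span of $S$. The delicate point is the interaction of high powers of $y$ with powers of $z_\pm$; one must feed the relation $f_1(z_+-1-2f_4)=0$ in at each stage to trade $y^{n-1}z_\pm^m$ for lower-$y$-degree terms plus a polynomial correction, then check both that the induction on $y$-degree actually terminates — which it does, since each substitution strictly lowers the $y$-degree of the surviving $z_\pm^m$-terms — and that the polynomial corrections are absorbed by the already-known $\mathbb{Z}$-basis of the projective class ring.
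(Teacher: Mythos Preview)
Your proposal is correct and follows essentially the same route as the paper's proof: define the surjection $\phi$, check that the five generators of $I$ vanish in $R$ via Lemma~\ref{3.4} and the identification $[V(n,0)]=f_1(x,y)$, then show the quotient is $\mathbb{Z}$-spanned by $S$ and conclude injectivity from Proposition~\ref{3.5}. Your three-round reduction to $S$ is exactly what the paper compresses into ``by the above equations and the proof of \cite[Corollary~5.8]{ChHasLinSun}''; in particular the key identity $\ol{y}^{n-1}\ol{z_\pm}=f_1(1+2f_4)-\sum_{i\ge 1}(-1)^i\binom{n-1-i}{i}\ol{x}^i\ol{y}^{n-1-2i}\ol{z_\pm}$ that you extract from $f_1z_\pm=(1+2f_4)f_1$ is displayed explicitly there.
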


\begin{proof}
By the definition of $R$, there exists a ring epimorphism $\phi$ from $\mathbb{Z}[x,y,z_+,z_-]$
onto $R$ such that
$$\phi(x)=[V(1,1)], \phi(y)=[V(2,0)], \phi(z_+)=[\O V(1,0)],\phi(z_-)=[\O^{-1}V(1,0)].$$
By \cite[Corollary 5.4(1), Lemma 5.6(1), Proposition 5.7]{ChHasLinSun} and Lemma \ref{3.4}, $\phi(I)=0$.
Hence $\phi$ induces a ring epimorphism $\ol{\phi}: \mathbb{Z}[x,y,z_+,z_-]/I\ra R$ such that
$\phi=\ol{\phi}\circ\pi$, where $\pi: \mathbb{Z}[x,y,z_+,z_-]\ra \mathbb{Z}[x,y,z_+,z_-]/I$
is the canonical epimorphism. Let $\ol{u}=\pi(u)$ for any $u\in\mathbb{Z}[x,y,z_+,z_-]$.
Then $\ol{x}^n=1$, $f_1(\ol{x}, \ol{y})f_2(\ol{x}, \ol{y})=0$,
$\ol{z_+}\ \ol{z_-}=1+f_1(\ol{x}, \ol{y})(2\ol{y}+4f_3(\ol{x}, \ol{y}))$, and
$f_1(\ol{x}, \ol{y})\ol{z_{\pm}}=f_1(\ol{x}, \ol{y})(1+2f_4(\ol{x}, \ol{y}))$.
The last equation above is equivalent to
$$\begin{array}{c}
\ol{y}^{n-1}\ol{z_{\pm}}=f_1(\ol{x}, \ol{y})(1+2f_4(\ol{x}, \ol{y}))
-\sum_{i=1}^{c(n-2)}(-1)^i\binom{n-1-i}{i}\ol{x}^i\ol{y}^{n-1-2i}\ol{z_{\pm}}.\\
\end{array}$$
Thus, by the above equations and the proof of \cite[Corollary 5.8]{ChHasLinSun}, one can see that
$\mathbb{Z}[x,y,z_+,z_-]/I$ is generated, as a $\mathbb Z$-module, by
$$\{\ol{x}^i\ol{y}^j, \ol{x}^i\ol{y}^l\ol{z_{\pm}}^m|0\<i\<n-1, 0\<j\<2n-2, 0\<l\<n-2, m\>1\}.$$
By Proposition \ref{3.5}, one can define a $\mathbb Z$-module map
$\psi: R\ra \mathbb{Z}[x,y,z_+,z_-]/I$ by
$\psi([V(1,1)]^i[V(2,0)]^j)=\ol{x}^i\ol{y}^j$ and
$\psi([V(1,1)]^i[V(2,0)]^l[\O^{\pm 1}V(1,0)]^m)=\ol{x}^i\ol{y}^l\ol{z_{\pm}}^m$,
where $0\<i\<n-1$, $0\<j\<2n-2$, $0\<l\<n-2$ and $m\>1$.
Now we have
$(\psi\circ\ol{\phi})(\ol{x}^i\ol{y}^j)=(\psi\circ\ol{\phi}\circ\pi)(x^iy^j)=\psi(\phi(x^iy^j))
=\psi([V(1,1)]^i[V(2,0)]^j)=\ol{x}^i\ol{y}^j$, and similarly
$(\psi\circ\ol{\phi})(\ol{x}^i\ol{y}^l\ol{z_{\pm}}^m)=\ol{x}^i\ol{y}^l\ol{z_{\pm}}^m$.
It follows that $\psi\circ\ol{\phi}={\rm id}$. Hence $\ol{\phi}$ is a ring monomorphism, and so
it is a ring isomorphism.
\end{proof}

Let $X=\{x, y, z_+, z_-, w_{m,\eta}|m\>1, \eta\in\ol{k}\}$, and let $\mathbb{Z}[X]$
be the corresponding polynomial ring.
Then $\mathbb{Z}[x,y,z_+,z_-]$ is a subring of $\mathbb{Z}[X]$. Take a subset $U$ of $\mathbb{Z}[X]$:
$$U=\left\{\left.\begin{array}{l}
x^n-1,\  f_1(x,y)f_2(x,y),\\
z_+z_--1-f_1(x,y)(2y+4f_3(x,y)),\\
f_1(x,y)(z_+-1-2f_4(x,y)),\ f_1(x,y)(z_+-z_-),\\
f_1(x,y)(w_{m,\eta}-m-mf_4(x,y)),\\
(z_+-f_4(x,y))w_{m,\eta}-mxf_1(x,y)^2,\\
(z_--f_4(x,y))w_{m,\eta}-mf_1(x,y)(y+f_3(x,y)),\\
w_{m,\eta}w_{s,\a}-msxf_1(x,y)^2,\\
w_{m,\eta}(w_{t,\eta}-1-f_4(x,y))-(t-1)mxf_1(x,y)^2\\
 \end{array}
\right|\begin{array}{l}
m, s, t\>1\\
\mbox{with } m\<t,\\
\eta, \a\in\ol{k}\\
\mbox{with } \eta\neq\a\\
\end{array}\right\},$$
where $f_1(x,y)$, $f_2(x,y)$, $f_3(x,y)$, $f_4(x,y)\in\mathbb{Z}[x,y]\subset\mathbb{Z}[x,y,z_+,z_-]\subset\mathbb{Z}[X]$
are given as before.
Let $I_0=(U)$, the ideal of $\mathbb{Z}[X]$ generated by $U$.

\begin{theorem}\label{3.10}
The Green ring $r(H_n(1,q))$ is isomorphic to $\mathbb{Z}[X]/I_0$.
\end{theorem}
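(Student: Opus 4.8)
The plan is to follow the strategy of the proof of Proposition~\ref{3.9}, now incorporating the band-module generators $w_{m,\eta}$. Since $\mathbb{Z}[X]$ is a polynomial ring, there is a well-defined ring homomorphism $\Phi:\mathbb{Z}[X]\ra r(H_n(1,q))$ determined by $\Phi(x)=[V(1,1)]$, $\Phi(y)=[V(2,0)]$, $\Phi(z_+)=[\O V(1,0)]$, $\Phi(z_-)=[\O^{-1}V(1,0)]$ and $\Phi(w_{m,\eta})=[M_m(1,0,\eta)]$, and it is surjective by Proposition~\ref{3.7}. Note that $\Phi(f_1(x,y))=[V(n,0)]$ by \cite[Lemma 5.6(1)]{ChHasLinSun}, so that $\Phi(xf_1(x,y)^2)=x[V(n,0)]^2$. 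First I would check that $\Phi$ annihilates every generator of $I_0$: the five generators already occurring in the ideal $I$ go to $0$ exactly as in Proposition~\ref{3.9} (equivalently, by Lemma~\ref{3.4}), while the five families of generators involving $w_{m,\eta}$ become, after the substitution $\Phi(f_1(x,y))=[V(n,0)]$, precisely the relations (1)--(5) of Lemma~\ref{3.6}. Hence $\Phi$ factors as $\Phi=\ol\Phi\circ\pi$ for a ring epimorphism $\ol\Phi:\mathbb{Z}[X]/I_0\ra r(H_n(1,q))$, where $\pi:\mathbb{Z}[X]\ra\mathbb{Z}[X]/I_0$ is the canonical projection; write $\ol u=\pi(u)$ for $u\in\mathbb{Z}[X]$.

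The heart of the proof is to show that $\mathbb{Z}[X]/I_0$ is generated, as a $\mathbb{Z}$-module, by the reduced monomials $\ol x^i\ol y^j$, $\ol x^i\ol y^l\ol z_\pm^m$ and $\ol x^i\ol y^l\ol w_{m,\eta}$ with the index bounds of Proposition~\ref{3.8} (namely $0\<i\<n-1$, $0\<j\<2n-2$, $0\<l\<n-2$, $m\>1$, $\eta\in\ol k$). I would prove this by successively reducing an arbitrary monomial of $\mathbb{Z}[X]$ modulo $I_0$. First, the relations $\ol w_{m,\eta}\ol w_{s,\a}=ms\,\ol x\,f_1(\ol x,\ol y)^2$ (for $\eta\neq\a$) and, for a fixed $\eta$, $\ol w_{m,\eta}\ol w_{t,\eta}=\ol w_{m,\eta}(1+f_4(\ol x,\ol y))+(t-1)m\,\ol x\,f_1(\ol x,\ol y)^2$ for $m\<t$ --- together with commutativity and the case $m=t$, which gives $\ol w_{m,\eta}^2$ --- show that every product of two or more of the $\ol w_{m,\eta}$ collapses to a $\mathbb{Z}[\ol x,\ol y]$-combination of $1$ and the single generators $\ol w_{m,\eta}$. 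Next, $(\ol z_\pm-f_4(\ol x,\ol y))\ol w_{m,\eta}$ lies in $\mathbb{Z}[\ol x,\ol y]\cdot f_1(\ol x,\ol y)^2$, so any monomial containing both a $\ol z_\pm$ and a $\ol w_{m,\eta}$ reduces to cases already treated. Hence $\mathbb{Z}[X]/I_0$ is spanned over $\mathbb{Z}[\ol x,\ol y]$ by $1$, the powers $\ol z_\pm^m$ $(m\>1)$ and the elements $\ol w_{m,\eta}$ $(m\>1,\ \eta\in\ol k)$. For the $\mathbb{Z}[\ol x,\ol y]$-coefficients of $1$ and of the $\ol z_\pm^m$ one reduces exactly as in the proof of Proposition~\ref{3.9}, using $\ol x^n=1$, $f_1f_2=0$ and $\ol y^{n-1}\ol z_\pm=f_1(\ol x,\ol y)(1+2f_4(\ol x,\ol y))-\sum_{i=1}^{c(n-2)}(-1)^i\binom{n-1-i}{i}\ol x^i\ol y^{n-1-2i}\ol z_\pm$; for the coefficient of $\ol w_{m,\eta}$ one uses $\ol x^n=1$ together with $f_1(\ol x,\ol y)\ol w_{m,\eta}=m\,f_1(\ol x,\ol y)(1+f_4(\ol x,\ol y))$, which rewrites $\ol y^{n-1}\ol w_{m,\eta}$ as a combination of $\ol w_{m,\eta}$ of $\ol y$-degree at most $n-2$ and a multiple of $1$. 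This establishes the claimed spanning set.

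To conclude, by Proposition~\ref{3.8} the elements $[V(1,1)]^i[V(2,0)]^j$, $[V(1,1)]^i[V(2,0)]^l[\O^{\pm1}V(1,0)]^m$ and $[V(1,1)]^i[V(2,0)]^l[M_m(1,0,\eta)]$, in the same index ranges, form a $\mathbb{Z}$-basis of $r(H_n(1,q))$; so there is a well-defined $\mathbb{Z}$-module map $\psi:r(H_n(1,q))\ra\mathbb{Z}[X]/I_0$ carrying them respectively to $\ol x^i\ol y^j$, $\ol x^i\ol y^l\ol z_\pm^m$ and $\ol x^i\ol y^l\ol w_{m,\eta}$. Evaluating $\psi\circ\ol\Phi$ on these generators, exactly as at the end of the proof of Proposition~\ref{3.9}, gives $\psi\circ\ol\Phi={\rm id}$ on the spanning set of $\mathbb{Z}[X]/I_0$ found above, so $\ol\Phi$ is injective and hence a ring isomorphism. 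I expect the main obstacle to be the reduction argument of the second paragraph: one must be certain that the relations listed in $U$ suffice to bring every monomial of $\mathbb{Z}[X]$ into the $\mathbb{Z}$-span of those reduced monomials --- in particular that eliminating the $\ol w$--$\ol w$ and $\ol z$--$\ol w$ products terminates and introduces no coefficients of $\ol y$-degree $\>n-1$ or $\ol x$-degree $\>n-1$ --- and this is exactly where the precise forms of Lemma~\ref{3.6}(1)--(5) enter; the remainder is bookkeeping parallel to Proposition~\ref{3.9}.
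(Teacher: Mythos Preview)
Your proposal is correct and follows essentially the same approach as the paper: define the surjection $\Phi:\mathbb{Z}[X]\ra r(H_n(1,q))$, check it kills the generators of $I_0$ (via Lemma~\ref{3.4} and Lemma~\ref{3.6}), reduce $\mathbb{Z}[X]/I_0$ to the same $\mathbb{Z}$-spanning set as Proposition~\ref{3.8}, and then build a one-sided inverse on that basis. The only organizational difference is that the paper packages the ``$R$ part'' by quoting Proposition~\ref{3.9} through an auxiliary map $\sigma:\mathbb{Z}[x,y,z_+,z_-]/I\ra\mathbb{Z}[X]/I_0$ (so that the spanning set is written as ${\rm Im}(\sigma)\cup\{\ol x^i\ol y^l\ol w_{m,\eta}\}$ rather than listing the $\ol x^i\ol y^j$ and $\ol x^i\ol y^l\ol z_\pm^m$ separately), whereas you redo that reduction inline; the content is identical.
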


\begin{proof}
We have already known that the ring $r(H_n(1,q))$ is commutative.
Hence by Proposition \ref{3.7},
there is a ring epimorphism $\psi$ from $\mathbb{Z}[X]$ onto $r(H_n(1,q))$ such that
$\psi(x)=[V(1,1)]$, $\psi(y)=[V(2,0)]$, $\psi(z_+)=[\O V(1,0)]$, $\psi(z_-)=[\O^{-1}V(1,0)]$
and $\psi(w_{m,\eta})=[M_m(1,0,\eta)]$ for any $m\>1$ and $\eta\in\ol{k}$.
By \cite[Corollary 5.4(1), Lemma 5.6(1), Proposition 5.7]{ChHasLinSun}, and Lemmas \ref{3.4} and \ref{3.6},
a straightforward verification shows that $\psi(u)=0$, $\forall u\in U$, and so $I_0\subseteq{\rm Ker}(\psi)$.
Hence there exists a unique ring epimorphism $\ol{\psi}$ from $\mathbb{Z}[X]/I_0$ onto $r(H_n(1,q))$
such that $\ol{\psi}(\ol{u})=\psi(u)$, $u\in\mathbb{Z}[X]$, where $\ol{u}$ is the image of
$u$ under the natural projection from $\mathbb{Z}[X]$ onto $\mathbb{Z}[X]/I_0$.
Obviously, $I\subseteq I_0\cap \mathbb{Z}[x,y,z_+,z_-]$ and $\psi(\mathbb{Z}[x,y,z_+,z_-])=R$,
where $I$ is given in Proposition \ref{3.9}. Therefore, one can define a ring map
$\s: \mathbb{Z}[x,y,z_+,z_-]/I\ra\mathbb{Z}[X]/I_0$ by $\s(u+I)=\ol{u}$,
$u\in\mathbb{Z}[x,y,z_+,z_-]$. Let us consider the composition:
$$f: \mathbb{Z}[x,y,z_+,z_-]/I\xrightarrow{\s}\mathbb{Z}[X]/I_0\xrightarrow{\ol{\psi}}r(H_n(1,q)).$$
Then ${\rm Im}(f)=R$. Hence $f$ can be viewed as a ring map from
$\mathbb{Z}[x,y,z_+,z_-]/I$ to $R$.
In this case, $f$ is exactly the ring isomorphism
$\ol{\phi}$ given in the proof of Proposition \ref{3.9}.
Hence $f$ is injective, so is $\s$. This implies that $I=I_0\cap\mathbb{Z}[x,y,z_+,z_-]$.
Moreover, the restriction $\ol{\psi}|_{\rm{Im}(\s)}$ can be regarded as a ring isomorphism
from $\rm{Im}(\s)$ to $R$.
Let $\widetilde{\psi}: R\ra\rm{Im}(\s)$ denote its inverse.

Now let $R_1$ be the $\mathbb Z$-submodule of $r(H_n(1,q))$ generated by
$$\{[V(1,1)]^i[V(2,0)]^l[M_m(1,0,\eta)]|0\<i\<n-1, 0\<l\<n-2, m\>1, \eta\in\ol{k}\}.$$
Then by Propositions \ref{3.5} and \ref{3.8}, it follows that the above set is a $\mathbb Z$-basis of $R_1$
and $r(H_n(1,q))=R\oplus R_1$. Hence there exists a $\mathbb Z$-module
map $\theta: r(H_n(1,q))\ra\mathbb{Z}[X]/I_0$ defined by $\theta(r)=\widetilde{\psi}(r)$ for $r\in R$,
and $\theta([V(1,1)]^i[V(2,0)]^l[M_m(1,0,\eta)])=\ol{x}^i\ol{y}^l\ol{w_{m,\eta}}$
for $0\<i\<n-1$, $0\<l\<n-2, m\>1$ and $\eta\in\ol{k}$. By the definition of $\s$, one can see that
${\rm Im}(\s)$ is generated, as a subring of $\mathbb{Z}[X]/I_0$, by $\ol{x}$, $\ol{y}$, $\ol{z_+}$ and $\ol{z_-}$.
By the definition of $I_0$, a straightforward computation similar to the proof of Proposition \ref{3.8}
shows that $\mathbb{Z}[X]/I_0$ is generated, as a $\mathbb Z$-module, by
$${\rm Im}(\s)\cup\{\ol{x}^i\ol{y}^l\ol{w_{m,\eta}}|0\<i\<n-1, 0\<l\<n-2, m\>1, \eta\in\ol{k}\}.$$
Obviously, $(\theta\ol{\psi})|_{\rm{Im}(\s)}={\rm id}_{\rm{Im}(\s)}$. For any
$0\<i\<n-1$, $0\<l\<n-2$, $m\>1$ and $\eta\in\ol{k}$, we have
$$\begin{array}{rl}
(\theta\ol{\psi})(\ol{x}^i\ol{y}^l\ol{w_{m,\eta}})=&\theta(\psi(x^iy^lw_{m,\eta}))\\
=&\theta([V(1,1)]^i[V(2,0)]^l[M_m(1,0,\eta)])=\ol{x}^i\ol{y}^l\ol{w_{m,\eta}}.\\
\end{array}$$
Thus, $\theta\ol{\psi}$ is the identity map on $\mathbb{Z}[X]/I_0$. Hence
$\ol{\psi}$ is injective. Consequently, $\ol{\psi}$ is a ring isomorphism.
\end{proof}

The stable Green ring was introduced in the study of Green rings for the modular
representation theory of finite groups. It is a quotient of the Green ring modulo
all projective representations (see \cite{Benson}). In our situation, it is easy to see
from \cite[Corollary 5.4(1)-(2), Lemma 5.6(2)]{ChHasLinSun} that the stable Green ring $St(H_n(1,q))$
of $H_n(1,q)$ is isomorphic to $r(H_n(1,q))/([V(n,0)])$, the quotient ring of the Green ring
$r(H_n(1,q))$ modulo the ideal $([V(n,0)])$ generated by $[V(n,0)]$.
Then by the proof of Theorem \ref{3.10}, one gets the following Corollary.

\begin{corollary}\label{3.11}
Let $\mathbb{Z}[X]$ be the polynomial ring in the following variables:
$$X=\{x, y, z, z^{-1}, w_{m,\eta}|m\>1, \eta\in\ol{k}\},$$
and let $J_s$ be the ideal of $\mathbb{Z}[X]$ generated by the following subset:
$$\left\{\left.\begin{array}{l}
x^n-1,\  f_1(x,y),\ zz^{-1}-1,\\
(z-f_4(x,y))w_{m,\eta},\
(z-z^{-1})w_{m,\eta},\\
w_{m,\eta}w_{s,\a}, \
w_{m,\eta}(w_{t,\eta}-1-f_4(x,y))\\
 \end{array}
\right|\begin{array}{l}
m, s, t\>1 \mbox{ with } m\<t,\\
\eta, \a\in\ol{k} \mbox{ with } \eta\neq\a\\
\end{array}\right\},$$
where $f_1(x,y), f_4(x,y)\in\mathbb{Z}[x,y]\subset\mathbb{Z}[X]$ are given as before.
Then the stable Green ring $St(H_n(1,q))$ of $H_n(1,q)$ is isomorphic to the
quotient ring $\mathbb{Z}[X]/J_0$.
\end{corollary}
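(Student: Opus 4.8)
The plan is to derive the corollary from Theorem~\ref{3.10} by quotienting both sides of the isomorphism there by the ideal generated by $[V(n,0)]$. Recall from the paragraph immediately preceding the statement that $St(H_n(1,q))\cong r(H_n(1,q))/([V(n,0)])$. The first step is to identify a polynomial preimage of $[V(n,0)]$ under the isomorphism $\ol{\psi}:\mathbb{Z}[X]/I_0\ra r(H_n(1,q))$ of Theorem~\ref{3.10}. As already recorded in the proof of Proposition~\ref{3.5} (via \cite[Corollary 5.4(1), Lemma 5.6(1)]{ChHasLinSun}), one has
$$[V(n,0)]=\sum_{i=0}^{c(n-2)}(-1)^i\binom{n-1-i}{i}[V(1,1)]^i[V(2,0)]^{n-1-2i},$$
so that $[V(n,0)]=\psi(f_1(x,y))$ with $f_1$ as defined before Proposition~\ref{3.9}. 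Hence under $\ol{\psi}$ the ideal $([V(n,0)])$ corresponds to $(f_1(x,y)+I_0)$, and therefore
$$St(H_n(1,q))\cong\bigl(\mathbb{Z}[X]/I_0\bigr)/\bigl(f_1(x,y)+I_0\bigr)\cong\mathbb{Z}[X]/\bigl(I_0+(f_1(x,y))\bigr),$$
where $\mathbb{Z}[X]$ is here the polynomial ring on $\{x,y,z_+,z_-,w_{m,\eta}\mid m\>1,\eta\in\ol{k}\}$ appearing in Theorem~\ref{3.10}.

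The second step is the purely ring-theoretic identification $I_0+(f_1(x,y))=J_0$, to be carried out after relabelling the polynomial generators $z_+$ as $z$ and $z_-$ as $z^{-1}$ (an isomorphism of the two ambient polynomial rings). I would compare generators. On one hand, the four generators of $I_0$ that are visibly divisible by $f_1(x,y)$ --- namely $f_1f_2$, $f_1(z_+-1-2f_4)$, $f_1(z_+-z_-)$ and $f_1(w_{m,\eta}-m-mf_4)$ --- already lie in $(f_1(x,y))$ and so become redundant once $f_1(x,y)$ itself is adjoined. On the other hand, the remaining generators of $I_0$ reduce modulo $f_1(x,y)$ exactly to $z_+z_--1$, $(z_+-f_4)w_{m,\eta}$, $(z_--f_4)w_{m,\eta}$, $w_{m,\eta}w_{s,\a}$ and $w_{m,\eta}(w_{t,\eta}-1-f_4)$, since the correction terms $mxf_1^2$, $mf_1(y+f_3)$, $msxf_1^2$ and $(t-1)mxf_1^2$ are all multiples of $f_1(x,y)$. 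Together with $f_1(x,y)$ these generate the same ideal as the listed generators of $J_0$; the only point worth noting is that modulo $(z-f_4)w_{m,\eta}$ one has $(z^{-1}-f_4)w_{m,\eta}\equiv-(z-z^{-1})w_{m,\eta}$, so that the pairs $\{(z-f_4)w_{m,\eta},(z^{-1}-f_4)w_{m,\eta}\}$ and $\{(z-f_4)w_{m,\eta},(z-z^{-1})w_{m,\eta}\}$ generate the same ideal. Reading the comparison in the reverse direction shows each generator of $J_0$ lies in $I_0+(f_1(x,y))$, giving the equality; composing with Step~1 then yields $St(H_n(1,q))\cong\mathbb{Z}[X]/J_0$.

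There is no serious obstacle; the substantive content is entirely in Theorem~\ref{3.10}, and what remains is careful bookkeeping in Step~2. The two small things to watch are that every $f_1$-correction term in the $w$-relations of $I_0$ indeed vanishes modulo $f_1(x,y)$, and that the relabelling $z_+\mapsto z$, $z_-\mapsto z^{-1}$ is consistent because the surviving relation $zz^{-1}-1$ makes $z$ invertible in the quotient with inverse $z^{-1}$, which is precisely what justifies the notation $z^{-1}$ in the statement.
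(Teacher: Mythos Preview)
Your proposal is correct and follows exactly the route the paper intends: the paper does not give a separate proof but simply states that the corollary follows from the proof of Theorem~\ref{3.10}, and your Steps~1 and~2 spell out precisely the bookkeeping needed to pass from $\mathbb{Z}[X]/I_0$ to $\mathbb{Z}[X]/J_0$ by adjoining $f_1(x,y)$ (the preimage of $[V(n,0)]$). Your reduction of the generators of $I_0+(f_1)$ and the observation that $\{(z-f_4)w_{m,\eta},(z^{-1}-f_4)w_{m,\eta}\}$ and $\{(z-f_4)w_{m,\eta},(z-z^{-1})w_{m,\eta}\}$ generate the same ideal are both correct.
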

\vspace{1cm}

\centerline{ACKNOWLEDGMENTS}

This work is supported by NNSF of China (No. 11571298).\\

\end{document}